\newtheorem{dfn}{Definition}[section]
\newtheorem{thm}[dfn]{Theorem}
\newtheorem{lem}[dfn]{Lemma}
\newtheorem{cor}[dfn]{Corollary}
\newtheorem{rem}[dfn]{Remark}
\newtheorem{ex}[dfn]{Example}
\newtheorem{prop}[dfn]{Proposition}
\newtheorem{conj}[dfn]{Conjecture}
\newcommand{\E}{\mathcal{E}}
\newcommand{\D}{\mathcal{D}}
\newcommand{\Db}{D^b}
\newcommand{\OO}{\mathcal{O}}
\newcommand{\RHom}{\mathbf{R}\mathrm{Hom}}
\newcommand{\Hom}{\mathrm{Hom}}
\newcommand{\hcat}{h_{\mathrm{cat}}}
\newcommand{\htop}{h_{\mathrm{top}}}
\newcommand{\CC}{\mathbb{R}}
\newcommand{\RR}{\mathbb{R}}
\newcommand{\ZZ}{\mathbb{Z}}
\newcommand{\ext}{\mathrm{ext}}
\newcommand{\Ext}{\mathrm{Ext}}
\newcommand{\Stab}{\mathrm{Stab}}
\newcommand{\Aut}{\mathrm{Aut}}
\newcommand{\Lf}{\mathbf{L}f^*}
\begin{document}
\title[ON ENTROPY OF SPHERICAL TWISTS]{ON ENTROPY OF SPHERICAL TWISTS}
\author[Genki Ouchi]{GENKI OUCHI\\
with an appendix by Arend Bayer}
\email{genki.oouchi@ipmu.jp}
\address{Graduate School of Mathematical Sciences, University of Tokyo, Meguro-ku, Tokyo 153-8914, Japan}
\address{School of Mathematics and Maxwell Institute,
University of Edinburgh,
James Clerk Maxwell Building,
Peter Guthrie Tait Road, Edinburgh, EH9 3FD,
United Kingdom}
\email{arend.bayer@ed.ac.uk}\maketitle
\begin{abstract}
In this paper, we compute categorical entropy of spherical twists. In particular, we prove that Gromov-Yomdin type conjecture holds for spherical twists. Moreover, we construct counterexamples of Gromov-Yomdin type conjecture for K3 surfaces modifying Fan's construction for even higher dimensional Calabi-Yau manifolds. 

The appendix, by Arend Bayer, shows non-emptiness of complements of a number of spherical objects in
the derived categories of K3 surfaces.
\end{abstract}
\section{Intorduction}
\subsection{Motivation and Results}

 A pair  $(X, f)$ of a topological space $X$ and a continuous self map $f : X \to X$ is called a topological dynamical system. The topological entropy $\htop(f)$ of $f$ is the fundamental invariant of a topological dynamical system $(X,f)$, that measures the complexity of topological dynamical systems. Recently, Dimitrov, Haiden, Katzarkov and Kontsevich \cite{DHKK} studied categorical dynamical systems. A categorical dynamical system $(\D, \Phi)$ is a pair of a triangulated category $\D$ and an exact endofunctor $\Phi : \D \to \D$. They introduced the categorical entropy $h_t(\Phi)$ of a categorical dynamical system $(\D,\Phi)$ as a real-valued function on real numbers. We denote the special value $h_0$ by $\hcat$. Regarding endomorphisms of algebraic varieties as topological dynamical systems, there are cross-disciplinary studies in the algebraic geometry and the complex dynamics \cite{Ogu}. In \cite{Ouc}, the author constructed  examples of automorphisms of positive topological entropy on hyperK\"ahler manifolds via autoequivalences of positive categorical entropy on derived categories of K3 surfaces. Then the hyperK\"ahler manifolds are constructed as moduli spaces of stable objects on K3 surfaces \cite{BM:walls}. Moreover, we compared their categorical and topological entropy. The case of abelian surfaces is studied in \cite{Yos}. In the setting of algebraic geometry, we can compute topological entropy by Gromov-Yomdin theorem.
 
 \begin{thm}\rm{}(\cite{Gro1}, \cite{Gro2}, \cite{Yom})\label{GYthm}\it{}
 Let $X$ be a smooth projective variety over $\mathbb{C}$.  For a surjective endomorphism $f: X \to X$, we have
 \[\htop(f)=\log\rho(f^*|_{\oplus_{p=0}^{\dim X}H^{p,p}(X,\ZZ)}). \]
 Here, $\rho$ is the spectral radii of linear maps and we consider the analytic topology on $X$.
 \end{thm}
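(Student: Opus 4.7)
The plan is to prove the equality by establishing Gromov's upper bound $\htop(f) \le \log \rho(f^*|_{\oplus_p H^{p,p}(X,\ZZ)})$ and Yomdin's matching lower bound separately, and then reconciling the two via Hodge theory.

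For the upper bound, I would fix an ample class $[\omega] \in H^{1,1}(X,\ZZ)$ with K\"ahler representative $\omega$ coming from a projective embedding, and consider the iterated graph
\[ \Gamma_n = \{(x, f(x),\dots,f^{n-1}(x)) : x \in X\} \subset X^n \]
equipped with the induced Riemannian metric from the product K\"ahler structure. Gromov's volume-comparison argument--bounding the number of $(n,\varepsilon)$-separated points by Riemannian $\varepsilon$-covers of $\Gamma_n$--gives
\[ \htop(f) \le \limsup_{n \to \infty} \frac{1}{n} \log \mathrm{vol}(\Gamma_n). \]
Expanding the induced volume form on $\Gamma_n$ produces a polynomial number (in $n$) of intersection numbers of the form $\int_X \bigwedge_i ((f^i)^*\omega)^{a_i}$, which by Hodge theory depend only on the classes $(f^i)^*[\omega]^{a_i} \in H^{a_i,a_i}(X,\ZZ)$. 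Since every class involved is integral and algebraic, the total grows at most like $\rho(f^*|_{\oplus_p H^{p,p}(X,\ZZ)})^n$ up to a polynomial factor, yielding the upper bound.

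For the lower bound, I would invoke Yomdin's theorem: for any $C^\infty$ self-map of a compact manifold, $\htop(f) \ge \log \rho(f^*|_{H^*(X,\RR)})$. Its proof picks a smooth cycle $Z$ representing an eigenclass of $f^*$ on $H^k$ with maximal eigenvalue, observes that $\mathrm{vol}((f^n)^{-1}(Z))$ grows at the corresponding exponential rate, and uses the \emph{Yomdin algebraic lemma}--a quantitative $C^r$-parametrization result proved by semi-algebraic triangulations--to extract exponentially many $(n,\varepsilon)$-separated orbits from these preimages. The main obstacle is precisely this algebraic lemma: it is a genuinely analytic ingredient requiring a delicate induction on dimension, and there is no algebraic shortcut even in the projective setting.

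Finally, because $f$ is holomorphic, $f^*$ preserves the Hodge decomposition, and the Khovanskii--Teissier inequalities together with the Hodge--Riemann bilinear relations imply that the spectral radius of $f^*$ on $H^*(X,\mathbb{C})$ is attained on $\oplus_p H^{p,p}$; the log-concavity estimate $\rho(f^*|_{H^{p,q}}) \le \sqrt{\rho(f^*|_{H^{p,p}}) \cdot \rho(f^*|_{H^{q,q}})}$ forces the maximum at $p=q$. Algebraicity of $f$ and the presence of an ample integral class further show that the spectral radius on $\oplus_p H^{p,p}(X,\RR)$ is already detected on the integral lattice $\oplus_p H^{p,p}(X,\ZZ)$, matching Gromov's upper bound with Yomdin's lower bound and completing the proof.
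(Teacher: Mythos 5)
This is Theorem \ref{GYthm}, which the paper quotes from Gromov and Yomdin without proof, so there is no internal argument to compare against; your outline is the standard route to this classical result and is sound as a roadmap. The three ingredients you identify are the right ones: Gromov's bound $\htop(f)\le\limsup\frac1n\log\mathrm{vol}(\Gamma_n)$, the reduction of $\mathrm{vol}(\Gamma_n)$ by Wirtinger's theorem to cohomological intersection numbers and hence to the dynamical degrees $d_p(f)=\rho(f^*|_{H^{p,p}(X,\RR)})$ (this uses $(f^n)^*=(f^*)^n$ on cohomology, valid for holomorphic maps but not for merely rational ones); Yomdin's inequality $\htop(f)\ge\log\rho(f^*|_{H^*(X,\RR)})$ via the $C^r$-parametrization lemma; and the Khovanskii--Teissier/Dinh--Sibony log-concavity $\rho(f^*|_{H^{p,q}})^2\le d_p\,d_q$ to locate the spectral radius on $\oplus_p H^{p,p}$. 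Two small corrections and one caveat. First, since $f$ need not be invertible, Yomdin's theorem is applied to forward images $f^n(Y)$ of submanifolds, which controls $\rho(f_*|_{H_*})=\rho(f^*|_{H^*})$ by adjunction under the intersection pairing; your phrasing via preimages $(f^n)^{-1}(Z)$ does not literally fit Yomdin's statement. Second, the final passage from $H^{p,p}(X,\RR)$ to the integral lattice $H^{p,p}(X,\ZZ)$ is not automatic from ``algebraicity of $f$'': it rests on the Perron--Frobenius-type fact that $d_p(f)$ is already computed by the growth of the integral quantities $\bigl((f^n)^*H^p\cdot H^{\dim X-p}\bigr)$ for an ample class $H$, so the spectral radius is attained on the $f^*$-invariant span of integral Hodge classes; this deserves to be said explicitly rather than asserted. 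With those caveats, your proposal is an accurate sketch, though of course the genuinely hard analytic content (Gromov's separated-set versus volume comparison and Yomdin's algebraic lemma) is being cited as a black box, exactly as the paper itself does.
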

 In the same setting, Kikuta and Takahashi proved the following theorem.
 
 \begin{thm}\rm{}(\cite{KT})\label{cattop}\it{}
 Let $X$ be a smooth projective variety over $\mathbb{C}$.  For a surjective endomorphism $f: X \to X$, we have
 \[\hcat(\Lf)=\log\rho(f^*|_{\oplus_{p=0}^{\dim X}H^{p,p}(X,\ZZ)}).\]
 In particular, $\hcat(\Lf)=\htop(f)$ holds.
  \end{thm}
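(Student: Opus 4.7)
The plan is to sandwich $\hcat(\Lf)$ between matching upper and lower bounds in terms of the spectral radius of $f^*$ on $\bigoplus_p H^{p,p}(X,\mathbb{Z})$, and then invoke Theorem \ref{GYthm} to identify this with $\htop(f)$.

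Fix a very ample divisor $H$ on $X$ and take as a (split) generator of $\Db(X)$ the object $G = \bigoplus_{i=0}^{\dim X}\OO_X(iH)$. By the Dimitrov--Haiden--Katzarkov--Kontsevich definition,
\[
\hcat(\Lf) \;=\; \lim_{n\to\infty}\frac{1}{n}\log \delta_0(G,\Lf^n G),
\]
and the standard inequality $\delta_0(G,\Lf^n G) \le \ext^*(G,\Lf^n G) := \sum_k \dim\Hom(G,\Lf^n G[k])$ reduces the upper bound to controlling
\[
\ext^*(G,\Lf^n G) \;=\; \sum_{i,j,k} h^k\!\left(X,\,\OO_X(jf^{n*}H-iH)\right).
\]
By Hirzebruch--Riemann--Roch, each $\chi(\OO_X(D))$ is a polynomial in intersection numbers of $D$ against fixed classes; combining this with standard cohomology estimates (Serre/Kodaira-type vanishing for sufficiently positive twists) bounds $\sum_k h^k(\OO_X(D))$ polynomially in those intersection numbers. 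Since $(f^{n*}H)^p\cdot H^{\dim X-p}$ are entries of the matrix of $(f^*)^n$ acting on $\bigoplus_p H^{p,p}$, they grow at most like $\rho(f^*|_{\oplus_p H^{p,p}})^n$ up to a polynomial factor in $n$. This yields $\hcat(\Lf)\le \log\rho(f^*|_{\oplus_p H^{p,p}(X,\mathbb{Z})})$.

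For the lower bound I would invoke the general DHKK inequality $\hcat(\Phi)\ge \log\rho(\Phi_N)$, where $\Phi_N$ is the induced action on the numerical Grothendieck group $\mathcal{N}(\Db(X))$. The Chern character identifies $\mathcal{N}(\Db(X))\otimes\mathbb{Q}$ with the subspace of algebraic classes in $\bigoplus_p H^{p,p}(X,\mathbb{Q})$ and intertwines $\Lf^*$ with $f^*$. For a surjective endomorphism, the dominant eigenvalue of $f^*$ on $\bigoplus_p H^{p,p}$ is realized on an effective algebraic class (by positivity of pullback of ample classes), so the spectral radius of $f^*$ on the algebraic subspace equals that on the full $\bigoplus_p H^{p,p}$. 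Together with the upper bound this gives $\hcat(\Lf) = \log\rho(f^*|_{\oplus_p H^{p,p}(X,\mathbb{Z})})$, and Theorem \ref{GYthm} identifies the right-hand side with $\htop(f)$.

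The main obstacle is the lower bound: one must ensure no loss occurs when passing from the categorical complexity $\delta_0$ to numerical invariants, and then from those to the full Hodge diagonal $\bigoplus_p H^{p,p}$. The choice $G = \bigoplus_i \OO_X(iH)$ is useful here because for $n$ large and appropriate $i,j$, Serre vanishing forces $\ext^*(\OO_X(iH),\Lf^n\OO_X(jH)) = \chi(\OO_X(iH),\Lf^n\OO_X(jH))$, removing any possible cancellation between odd and even degree Ext's and turning the lower bound into a direct Riemann--Roch intersection computation whose leading term sees $\rho(f^*)^n$.
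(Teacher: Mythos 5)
Your overall architecture — a lower bound via the spectral radius on the numerical Grothendieck group plus an upper bound via cohomology growth of twists of a generator, then Theorem \ref{GYthm} to identify the answer with $\htop(f)$ — is exactly the architecture of the proof in [KT] (reproduced and generalized in Section 5 of this paper). The lower bound step is fine: it is the [KST] inequality $\hcat(\Phi)\geq\log\rho([\Phi])$, and the passage from the algebraic classes to all of $\bigoplus_p H^{p,p}$ via the fact that the dynamical degrees of a surjective endomorphism are computed on powers of an ample class is the standard argument.

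The genuine gap is in your upper bound. You take $G=\bigoplus_{i}\OO_X(iH)$ in \emph{both} slots, so you must control $\sum_k h^k\bigl(X,\OO_X(f^{n*}(jH)-iH)\bigr)$. The divisor $f^{n*}(jH)-iH$ is a nef class minus an ample class, so no vanishing theorem applies to it uniformly in $n$: your claim that ``Serre vanishing forces $\ext^*=\chi$ for $n$ large and appropriate $i,j$'' already fails for $f=\mathrm{id}$, $i>j$, where $H^{\dim X}(\OO_X((j-i)H))\neq 0$, and for automorphisms of infinite order the nefness of $f^{n*}(jH)-iH$ is genuinely delicate since $f^{n*}(jH)$ degenerates to the boundary of the nef cone. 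Likewise the asserted ``standard cohomology estimate'' bounding $\sum_k h^k(\OO_X(D))$ polynomially in the intersection numbers of $D$ is not a standard fact for non-nef $D$ and is not proved. The fix — which is precisely the device used in [KT] and in Section 5 here — is to use a \emph{second} generator $G'=\bigoplus_i\OO_X(-iH)$ in the first slot (the definition of $h_t$ permits two different split generators), so that one computes $\RHom(G',\Lf^n G)=\bigoplus_{i,j}H^*\bigl(X,f^{n*}(jH)+iH\bigr)$ with an ample-plus-nef twist; Kodaira vanishing over $\mathbb{C}$ (or Fujita vanishing, which gives the uniformity in the nef summand $f^{n*}(jH)$ for free and works over any algebraically closed field) then kills all higher cohomology at once, so $\delta'(G',\Lf^n G)=\chi(G',\Lf^n G)$ exactly, and the Euler pairing is controlled by $\rho([\Lf])^n$ with no further estimates needed. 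With that substitution your argument closes; without it the upper bound is not established.
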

  By Theorem \ref{cattop}, it looks like natural to expect the Gromov-Yomdin type conjecture.
  
  \begin{conj}\rm{}(\cite{KT})\it{}\label{KTconj}
  Let $X$ be a smooth projective variety over $\mathbb{C}$. For an autoequivalence $\Phi \in \Aut(D^b(X))$, we have 
  \[\hcat(\Phi)=\log\rho([\Phi]).\]
  Here, $[\Phi] : K_{\mathrm{num}}(X) \to K_{\mathrm{num}}(X)$ is the induced linear map on the numerical Grothendieck group  $K_{\mathrm{num}}(X)$ of 
  $X$.
  \end{conj}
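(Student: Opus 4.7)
The plan is to prove the equality $\hcat(\Phi)=\log\rho([\Phi])$ by establishing the two inequalities separately, as neither direction is formal.

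For the lower bound $\log\rho([\Phi]) \leq \hcat(\Phi)$, I would fix a split generator $G = \bigoplus_j G_j$ of $\Db(X)$ and use the definition $\hcat(\Phi) = \lim_{n\to\infty} \tfrac{1}{n}\log \sum_{k} \dim \Hom(G, \Phi^n G[k])$. The Euler pairing satisfies
\[
\bigl|\chi([G_a],[\Phi]^n[G_b])\bigr| \;\leq\; \sum_k \dim \Hom(G_a, \Phi^n G_b[k]) \;\leq\; \sum_k \dim \Hom(G, \Phi^n G[k])
\]
for every pair $(a,b)$. Because the Euler pairing is non-degenerate on $K_{\mathrm{num}}(X)$, these numbers control the matrix entries of $[\Phi]^n$ in a basis adapted to the $[G_j]$, so the operator norm of $[\Phi]^n$ is bounded, up to a constant independent of $n$, by the right-hand side. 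Dividing by $n$, taking logs, and passing to the limit yields the inequality, following the strategy of \cite{DHKK,KT}.

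For the upper bound $\hcat(\Phi) \leq \log\rho([\Phi])$, the natural strategy is to introduce a Bridgeland stability condition $\sigma \in \Stab(X)$ and decompose $\Phi^n G$ via its Harder--Narasimhan filtration with respect to $\sigma$ into $\sigma$-semistable factors $A_{n,1},\dots,A_{n,m_n}$. A standard devissage bounds $\sum_k\dim\Hom(G,\Phi^n G[k])$ by $\sum_j \sum_k \dim\Hom(G,A_{n,j}[k])$ up to polynomial error in $n$. One would then need two further estimates: a uniform bound of the shape $\sum_k\dim\Hom(G,A[k]) \leq C\,\|[A]\|$ for every $\sigma$-semistable $A$, with respect to some fixed norm on $K_{\mathrm{num}}(X)$; and a control $\sum_j \|[A_{n,j}]\| \leq C'\,\rho([\Phi])^n$ up to subexponential factors, coming from standard mass estimates together with the spectral decomposition of $[\Phi]$. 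Combining these would give the desired inequality.

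The main obstacle is the semistable bound $\sum_k\dim\Hom(G,A[k]) \leq C\,\|[A]\|$. On curves and surfaces it can be extracted from boundedness of moduli of semistables together with Bogomolov-type inequalities, but in general the self-$\Ext$ groups of a $\sigma$-semistable object are not controlled by its numerical class; moduli of semistables can be positive-dimensional with fixed numerical class, and this inflates the Ext-complexity beyond what $\|[A]\|$ sees. A separate obstruction is that $\Stab(X)$ is not even known to be non-empty for general smooth projective $X$ of dimension at least three, so the HN filtration is unavailable at the outset. These obstructions are not purely technical: $\hcat$ can detect non-numerical information invisible to $[\Phi]$, and, as the later sections of this paper demonstrate, the upper bound can genuinely fail on K3 surfaces, so the conjecture as stated cannot hold in full generality and any proof must be restricted to a more favourable class of autoequivalences.
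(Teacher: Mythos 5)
The statement you were asked to prove is labelled as a conjecture in the paper, and the paper contains no proof of it; on the contrary, Section~4 of the paper \emph{refutes} it. So the correct outcome here is not a proof but a disproof, and your closing paragraph reaches exactly the right verdict: the upper bound $\hcat(\Phi)\leq\log\rho([\Phi])$ genuinely fails. Concretely, the paper takes a K3 surface $X$ with a very ample class $H$, $H^2=2d$, and the autoequivalence $\Phi:=T_{\OO_X}\circ(-\otimes\OO_X(-H))$; a direct computation with the canonical triangle defining the spherical twist shows $h_0(\Phi)\geq\log(d+2)$, while $\rho([\Phi])$ equals $1$ for $d\leq 4$ and $\tfrac{d-2+\sqrt{d^2-4d}}{2}<d+2$ for $d\geq 5$. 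This is a sharper and more elementary obstruction than the ones you list: the failure already occurs on a surface, where stability conditions exist and moduli of semistables are bounded, so it is not the absence of $\Stab(X)$ or the failure of a semistable $\Ext$-bound that kills the conjecture, but the fact that $\hcat$ sees growth of individual $\Ext$-groups (here $\Ext^{n+2}(\OO_X,\Phi^n(\OO_X(-i))\otimes\OO_X(-1))$ grows like $(d+2)^n$) that cancels in the Euler characteristic and is therefore invisible to $[\Phi]$.

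Your lower-bound argument is sound in outline and is precisely the result of Kikuta--Shiraishi--Takahashi \cite{KST} that the paper invokes: non-degeneracy of the Euler pairing on $K_{\mathrm{num}}(X)$ lets one bound the matrix entries of $[\Phi]^n$ by $\delta'(G,\Phi^n G')$, giving $\log\rho([\Phi])\leq\hcat(\Phi)$. The one point to tighten is that your proposed upper-bound machinery should not be presented as a viable strategy with technical obstacles; it is structurally doomed, because any bound of the form $\sum_k\dim\Hom(G,A[k])\leq C\|[A]\|$ for semistable $A$ would, combined with your mass estimate, prove an inequality that the paper's example shows to be false. If you want a positive statement, the paper proves the conjecture for spherical twists $T_\E$ (Theorem~3.1, $\hcat(T_\E)=0=\log\rho([T_\E])$ under a non-emptiness hypothesis on $^{\perp}\E$), and it holds for curves, varieties with ample $(\pm)$canonical bundle, and abelian surfaces; the counterexamples arise only for genuinely non-geometric autoequivalences such as the composite $\Phi$ above, following \cite{Fan}.
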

  
  Conjecture \ref{KTconj} is known to be true for smooth projective curves \cite{K}, smooth projective varieties with the ample (anti)canonical line bundles \cite{KT}, abelian surfaces \cite{Yos}. Moreover, it is also true for shifts functors \cite{DHKK}, automorphisms of varieties \cite{KT} and line bundle tensors \cite{DHKK} on any smooth projective varieties.  Kikuta, Shiraishi and Takahashi \cite{KST} proved the lower bounds in Conjecture \ref{KTconj} on perfect derived categories of smooth proper dg algebras. In this paper, we compute categorical entropy of spherical twists.
  
  \begin{thm}\label{mainintr}
Let $\D$ be the perfect derived category of a smooth proper dg algebra. Let $T_\E$ be the spherical twist of a $d$-spherical object $\E \in \D$. 
For $t  \leq 0$, we have $h_t(T_\E)=(1-d)t$. In particular, $\hcat(T_\E)=0$ holds.
Assume that $^{\perp}\E \neq \emptyset$. Then we have $h_t(T_\E)=0$ for $t>0$.
\end{thm}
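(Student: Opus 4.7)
The plan is to compute $h_t(T_\E)$ by combining an explicit iterated-cone description of $T_\E^n$ with matching upper and lower complexity bounds. Since $h_t$ is independent of the split-generator, I would enlarge $G$ so that $\E$ appears as a summand, and write $G = \E \oplus G_0$. Iterating the defining triangle $\RHom(\E,-)\otimes \E \to \mathrm{id} \to T_\E$ on $G_0$, and using $T_\E(\E) = \E[1-d]$, a short induction produces a cone-filtration of $T_\E^n(G_0)$ with graded pieces $G_0$ and $\RHom(\E,G_0)\otimes \E[1+(1-d)k]$ for $k = 0,\dots,n-1$. The other summand evolves purely by shift: $T_\E^n(\E) = \E[(1-d)n]$.

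For the upper bound I would use subadditivity of $\delta_t(G,-)$ on cones (by concatenating filtrations) together with $\E \mid G$ to obtain
\[
\delta_t(G, T_\E^n G) \le \delta_t(G, G_0) + P(t)\sum_{k=0}^{n-1} e^{(1+(1-d)k)t} + e^{(1-d)nt},
\]
where $P(t) := \sum_i \dim \Ext^i(\E, G_0)\, e^{-it}$ is a fixed Laurent polynomial. The geometric series is dominated by $e^{(1-d)nt}$ for $t<0$, is linear in $n$ at $t=0$, and is bounded for $t>0$; taking $\tfrac{1}{n}\log$ yields $h_t(T_\E) \le (1-d)t$ for $t \le 0$ and $h_t(T_\E) \le 0$ for $t > 0$.

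For the matching lower bound on $(-\infty,0]$, I would exploit that $\E[(1-d)n] = T_\E^n(\E)$ is still a direct summand of $T_\E^n(G)$, and introduce the Poincar\'e mass function
\[
m^G_t(F) := \sum_{k\in\ZZ}\dim \Hom(G, F[k])\, e^{-kt},
\]
which in the smooth-proper-dg setting is a finite nonnegative real, strictly positive on nonzero $F$, shift-covariant as $m^G_t(F[1]) = e^t m^G_t(F)$, and subadditive on triangles via the long exact sequence of $\Hom(G,-)$. Induction on any cone-filtration of $F \oplus F'$ into shifts of $G$ then gives $\delta_t(G,F) \ge m^G_t(F)/m^G_t(G)$; applied to $F = \E[(1-d)n]$ and using $\Hom(G,\E) \ne 0$, this yields $\delta_t(G, T_\E^n G) \ge C\,e^{(1-d)nt}$ with $C > 0$, hence $h_t(T_\E) \ge (1-d)t$.

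For $t > 0$ under the hypothesis ${}^\perp\E \ne \emptyset$, I would pick $0\ne F_0 \in {}^\perp\E$ and enlarge the generator so that $F_0$ is a summand. Then $\RHom(\E, F_0) = 0$ forces $T_\E(F_0) = F_0$, so $F_0$ is a summand of $T_\E^n(G)$ for every $n$, and monotonicity of $\delta_t$ under summands gives $\delta_t(G, T_\E^n G) \ge \delta_t(G, F_0) > 0$; thus $h_t(T_\E) \ge 0$, matching the upper bound. The main obstacle is the third step: in the smooth-proper-dg generality one does not have a canonical Bridgeland stability condition, so the lower complexity bound must be built from the Poincar\'e mass $m^G_t$ directly. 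This is morally the core of the Kikuta-Shiraishi-Takahashi argument; checking its hypotheses and the induction carefully is the only place where any real work is required.
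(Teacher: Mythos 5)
Your proposal is correct and follows essentially the same route as the paper: the same iterated cone decomposition of $T_\E^n(G)$ for the upper bound, and the same trick of enlarging the split generator by $\E$ (which evolves by the pure shift $\E[(1-d)n]$) for $t\le 0$, respectively by an object of $^{\perp}\E$ (which is fixed by $T_\E$) for $t>0$, to get the matching lower bounds. The only difference is technical: the paper invokes the Dimitrov--Haiden--Katzarkov--Kontsevich characterization of entropy via $\delta'_t$ and computes entirely with it, whereas you work with $\delta_t$ directly and re-derive the needed comparison $\delta_t(G,F)\ge \delta'_t(G,F)/\delta'_t(G,G)$ (your mass function $m^G_t$ is exactly $\delta'_t(G,-)$), which is the same DHKK lemma.
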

In general, it is not known whether orthogonal complements of spherical objects are non-empty. In the
appendix by Arend Bayer, we show that $^{\perp}\mathcal{E}$ is not empty when $\E$ is a
Gieseker-stable spherical sheaf on a K3 surface, or any spherical object
in the derived category of a K3 surface of Picard number one.
\begin{cor}
Let $X$ be a K3 surface of Picard number one. For a spherical object $\E \in D^b(X)$ and $t \in \mathbb{R}$, 
we have 
\[ h_t(T_\E):=\begin{cases}
    0 & (t \geq 0) \\
  -t  & (t<0).
  \end{cases}
\]
\end{cor}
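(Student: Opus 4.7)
The plan is to obtain this corollary as a direct application of Theorem \ref{mainintr}, combined with the non-emptiness result from Bayer's appendix. First I would verify that the hypotheses of Theorem \ref{mainintr} are satisfied in this setting: for $X$ a smooth projective variety, $D^b(X)$ admits a dg enhancement that is equivalent to the perfect derived category of a smooth proper dg algebra (via a compact generator / tilting), so K3 surfaces fall within the framework of Theorem \ref{mainintr}.

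Next I would pin down the sphericality index. Since $X$ is a K3 surface, $\omega_X \cong \mathcal{O}_X$ and $\dim X = 2$; consequently every spherical object $\mathcal{E} \in D^b(X)$ is a $2$-spherical object, so $d = 2$. Substituting into the first part of Theorem \ref{mainintr} yields
\[ h_t(T_\mathcal{E}) = (1-d)t = -t \quad \text{for all } t \leq 0, \]
which already handles the $t < 0$ case and produces the value $0$ at the boundary $t = 0$.

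For the range $t > 0$, the second assertion of Theorem \ref{mainintr} requires the auxiliary hypothesis ${}^{\perp}\mathcal{E} \neq \emptyset$. This is precisely the content of Bayer's appendix for K3 surfaces of Picard number one: the left orthogonal complement of any spherical object contains a nonzero object. Feeding this into Theorem \ref{mainintr} gives $h_t(T_\mathcal{E}) = 0$ for all $t > 0$, which matches the value at $t = 0$ obtained above and completes the piecewise formula.

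There is essentially no technical obstacle internal to this corollary — it is a clean specialization of Theorem \ref{mainintr}. The genuinely nontrivial input is Bayer's appendix, which supplies the non-emptiness of ${}^{\perp}\mathcal{E}$ on a Picard-rank-one K3 surface and is invoked here as a black box; without it the formula for $t > 0$ could not be asserted on the basis of Theorem \ref{mainintr} alone.
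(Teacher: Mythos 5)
Your proposal is correct and is exactly the paper's intended argument: specialize Theorem \ref{mainintr} to $d=2$ (every spherical object on a K3 surface is $2$-spherical), which gives $h_t(T_\E)=-t$ for $t\le 0$, and invoke Corollary A.2 of Bayer's appendix to supply ${}^{\perp}\E\neq\emptyset$ so that the second part of the theorem yields $h_t(T_\E)=0$ for $t>0$. No gaps.
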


On  the derived categories of Calabi-Yau algebras associated with acyclic quivers, Ikeda \cite{Ike} computed categorical entropy of spherical twists without the assumption in Theorem \ref{mainintr}. 

Recently, Fan \cite{Fan} found counterexamples of Conjecture \ref{KTconj} on $2d$-dimensional Calabi-Yau hypersurfaces  for $d\geq2$. Modifying his proof, we construct counterexamples of Conjecture $\ref{KTconj}$ for any K3 surfaces.
  
 \begin{prop}
 Let $X$ be a K3 surface and $H$ be a very ample line bundle on $X$ with $H^2=2d$. Define the autoequivalence
 \[ \Phi:=T_{\OO_X} \circ (- \otimes \OO_{X}(-H)) \]
 of $D^b(X)$. 
We have 
\[ h_0(\Phi) \geq \log \rho(d+2) \]
and 
\[ \rho(\Phi^H) = \begin{cases}
    1 & (d =1,2,3,4) \\
   \frac{d-2+\sqrt{d^2-4d}}{2}  & (d \geq 5).
  \end{cases}\]
  In particular, we obtain the inequality $h_0(\Phi) > \log \rho(\Phi^H)$.
  \end{prop}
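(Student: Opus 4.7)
The plan is to combine a direct computation of the numerical action of $\Phi$ on $K_{\mathrm{num}}(X)$ with a careful analysis of the top cohomological degree of $\RHom(\OO_X,\Phi^n\OO_X)$: the first gives $\rho(\Phi^H)$, and the second yields the lower bound $h_0(\Phi) \geq \log(d+2)$; an elementary arithmetic comparison then establishes the strict inequality.

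For the spectral radius, $K_{\mathrm{num}}(X)$ has rank three, and in the basis of Mukai vectors $v(\OO_X)=(1,0,1)$, $(0,H,0)$, $v(k(x))=(0,0,1)$ the twist $T_{\OO_X}$ acts by $(r,c,s)\mapsto(-s,c,-r)$ (using $\langle v(\OO_X),(r,c,s)\rangle=-r-s$), while $-\otimes \OO_X(-H)$ acts by $(r,DH,s) \mapsto (r,(D-r)H,s-2dD+rd)$. The composite is an integer $3\times 3$ matrix with characteristic polynomial $(\lambda+1)(\lambda^2+(d-2)\lambda+1)$; the quadratic has roots on the unit circle for $d\leq 4$ and real roots of maximal modulus $\tfrac{d-2+\sqrt{d^2-4d}}{2}$ for $d\geq 5$.

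The heart of the proof is the lower bound. Riemann--Roch and Serre duality give $\RGam(\OO_X(-H))\cong\mathbb{C}^{d+2}[-2]$, so $\Phi(\OO_X)$ fits into the triangle $\OO_X^{\oplus(d+2)}[-2]\to\OO_X(-H)\to\Phi(\OO_X)$. Setting $V_n:=\Phi^n(\OO_X)$ and $a_n^{(k)}:=\RGam(V_n(-kH))$, twisting the defining triangle of $V_n$ by $\OO_X(-kH)$ and applying $\RGam$ yields distinguished triangles in $\Db(\mathrm{Vect}_{\mathbb{C}})$
\[
a_{n-1}^{(1)}\otimes \RGam(\OO_X(-kH))\to a_{n-1}^{(k+1)}\to a_n^{(k)}.
\]
By induction on $n$ I claim that for every $k\geq 1$ the top nonzero cohomology of $a_n^{(k)}$ lies in degree $n+2$ with dimension $(2+k^2d)(d+2)^n$. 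The inductive step uses that $a_{n-1}^{(1)}$ and $a_{n-1}^{(k+1)}$ both have top degree $n+1$ by hypothesis, so after tensoring with $\RGam(\OO_X(-kH))=\mathbb{C}^{2+k^2d}[-2]$ the first term of the triangle has top degree $n+3$, exceeding the middle term by $2$; since any map between complexes of $\mathbb{C}$-vector spaces supported in different single degrees vanishes, the top-degree summand of the first term maps to zero and survives into the cone. The spherical-twist identity $\RHom(\OO_X, T_{\OO_X}(F))\cong \RHom(\OO_X,F)[-1]$ (obtained by applying $\RHom(\OO_X,-)$ to the twist triangle and using $\RHom(\OO_X,\OO_X)\cong\mathbb{C}\oplus \mathbb{C}[-2]$) gives $\RHom(\OO_X, V_n)=a_{n-1}^{(1)}[-1]$, and hence $\dim \RHom(\OO_X, V_n)\geq (d+2)^n$. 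Combining this with the standard complexity inequality $\delta_0(G, V_n)\geq \dim \RHom(\OO_X, V_n)/\dim \RHom(\OO_X, G)$ from \cite{KST}, applied with $G$ any split generator containing $\OO_X$ as a summand, forces $h_0(\Phi)\geq \log(d+2)$.

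Finally, $d+2>\rho(\Phi^H)$ is immediate for $d\leq 4$, and for $d\geq 5$ squaring reduces it to $16d+36>0$. The main technical obstacle is the inductive control of $a_n^{(k)}$: the triangles need not split in lower degrees, so one cannot identify $a_n^{(k)}$ with a direct sum globally, but the robust cohomological gap of $2$ between the first and middle terms isolates and preserves the top-degree contribution at every iteration.
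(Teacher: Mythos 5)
Your argument is essentially the paper's own proof in different notation: the paper's Lemma 4.1 is exactly your inductive claim about the top cohomological degree of $\RGam(\Phi^n(\OO_X(-i))\otimes\OO_X(-k))$ (proved there by the same device of twisting the spherical-twist triangle by $\OO_X(-k)$, applying $\RHom(\OO_X,-)$, and using Kodaira vanishing so that the first term sits two degrees above the middle term), and its Lemma 4.2 extracts the same $(d+2)^n$ growth of the top Ext group. Your induction is correct, including the dimension count $(2+k^2d)(d+2)^n$ and the identity $\RHom(\OO_X,T_{\OO_X}F)\cong\RHom(\OO_X,F)[-1]$; the final passage to $h_0$ via the KST inequality is a harmless detour, since one can instead bound $\delta'(G,\Phi^n(G'))$ directly using Theorem 2.4 once $\OO_X$ is a summand of both generators.

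The one genuine (though easily repaired) gap is in the spectral radius computation: you assert that $K_{\mathrm{num}}(X)$ has rank three, which holds only when $X$ has Picard rank one, whereas the proposition is stated for an arbitrary K3 surface. For higher Picard rank you must observe that $\Phi^H$ preserves the rank-three sublattice $L_d=H^0(X,\ZZ)\oplus\ZZ H\oplus H^4(X,\ZZ)$ and acts as the identity on $(0,D,0)$ for $D$ in the orthogonal complement of $H$ in $\mathrm{NS}(X)$ (both $-\otimes\OO_X(-H)$ and $T_{\OO_X}$ fix such classes), so that $\rho(\Phi^H)=\rho(\Phi^H|_{L_d})$; this is how the paper concludes. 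Your matrix and characteristic polynomial $(\lambda+1)(\lambda^2+(d-2)\lambda+1)$ agree with the paper's, and the arithmetic comparison $d+2>\rho(\Phi^H)$ is correct.
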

  
 It is interesting to study which autoequivalences satisfy Conjecture \ref{KTconj}.  In the last section, we remark that Conjecture \ref{KTconj} is true for automorphisms of smooth projective varieties over any algebraically closed fields. In the original proof of Theorem \ref{cattop}, they used Kodaira vanishing theorem. However,  Fujita vanishing theorem \cite{Fuj} is enough in this situation.
So we can generalize Theorem \ref{cattop} for any algebraically closed fields. 

\subsection{Notation and Convention}
All triangulated categories are linear over fields. For a smooth projective variety $X$, we use the following notation.
For coherent sheaves $E$ and $F$ on $X$, we set 
\[h^i(E):=\dim H^i(E), \ext^i(E,F):= \dim \Ext^i(E,F) \]
for $i \in \ZZ$. 
For a smooth proper dg category $\D$ over a field $k$ and $d \in \ZZ_{>0}$, a $d$-spherical object $\E \in \D$ is a spherical object satisfying $\RHom(\E,\E)=k \oplus k[-d]$.
For a liner map $f : V \to V$ on a finite dimensional vector space over $\mathbb{Q}$ or $\mathbb{R}$ or $\mathbb{C}$, the spectral radius $\rho(f)$  of  $f$ is the maximum of  absolute values of complex eigenvalues of $f$.

\subsection*{Acknowledgements}
The author would like to express his sincere gratitude to Professor Yukinobu Toda for valuable comments and warmful encouragement.  He would like to thank Yu-Wei Fan for showing the draft of the paper \cite{Fan} to him. Advice and comments given by Wahei Hara and Professor Kota Yoshioka  has been a great help in the formulation of Proposition 4.3. In the first version of this paper, he treated only quartic K3 surfaces.  He would also like to thank Akishi Ikeda, Kotaro Kawatani, Kohei Kikuta, Naoki Koseki for valuable conversation and comments.  This work is supported by Grant-in-Aid for JSPS Research Fellow 15J08505.

\section{Preliminaries} \label{sect:prelims}
\subsection{Categorical entropy}
In this section, we recall the fundamental results for categorical entropy.
Let $\D$ be a  triangulated category. For an object $G \in \D$, we denote  $\langle G\rangle_{\mathrm{thick}}$ as the smallest full triangulated subcategory of $\D$, that contains $G$ and closed under taking direct summand. An object $G$ is a split generator if $\D=\langle G \rangle_{\mathrm{thick}}$ holds. Assume that $G$ is a split generator of $\D$.
\begin{dfn}$($\cite{DHKK}$)$
Let $E$ be an object in $\mathcal{D}$. For a real number $t \in \mathbb{R}$,
we define the complexity $\delta_{t}(G,E)$ of $F$ with respect to $E$ as follow:
\[ \delta_{t}(G,E):=\inf \biggl\{\sum_{i=1}^{k}e^{n_i t} \mid E_{i-1} \to E_i \to G[n_i]  (1 \leq i \leq k) ,E_0=0, E_k=E \oplus E^{\prime}\biggr\}. \] 
\end{dfn}
The definition of categorical entropy is as follow.

\begin{thm}$($\cite{DHKK}$)$
Let $G, G' \in \D$ be split generators. For an exact $\Phi : \D \to \D$ and a real number $t \in \mathbb{R}$, we define the entropy of $\Phi$ as
\[ h_t(\Phi):=\lim_{n \to \infty}\log\delta_t(G, \Phi^n(G'))/n. \]
This limit exists in the interval $[-\infty, \infty)$ and is independent to a choice of $G$ and $G'$. Moreover, we define $\hcat(\Phi):=h_0(\Phi) \in \mathbb{R}_{\geq0}$.
\end{thm}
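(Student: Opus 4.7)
The plan is to follow the Fekete-lemma subadditivity argument from \cite{DHKK}. Everything rests on two elementary properties of the complexity function, which I would establish as preparatory lemmas. \emph{Submultiplicativity}: $\delta_t(G_1, E) \leq \delta_t(G_1, G_2) \cdot \delta_t(G_2, E)$ for any split generators $G_1, G_2$ and any $E \in \D$. \emph{Functoriality}: $\delta_t(\Phi F, \Phi E) \leq \delta_t(F, E)$ for any exact endofunctor $\Phi$. Functoriality is immediate: push an approximately optimal filtration of $F \oplus F'$ with cones $G[n_i]$ forward through $\Phi$, which preserves distinguished triangles and direct sums, to get a filtration of $\Phi F \oplus \Phi F'$ with cones $\Phi(G)[n_i]$ of the same cost. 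Submultiplicativity is a splicing argument: given an optimal filtration of $E \oplus E'$ with cones $G_2[m_j]$ and, for each $j$, a filtration of $G_2 \oplus G_2''$ with cones $G_1[n_i^{(j)}]$, shift the latter by $m_j$ and interpolate it between the $j$-th and $(j+1)$-st step; the shift multiplies each subfiltration's cost by $e^{m_j t}$, so the total cost is $\sum_j e^{m_j t} \cdot \delta_t(G_1, G_2) = \delta_t(G_1, G_2) \cdot \delta_t(G_2, E)$.

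Granting these, the case $G = G'$ is handled by applying submultiplicativity with $G_1 = G_2 = G$ and target $\Phi^{m+n}(G)$, followed by functoriality for the exact functor $\Phi^m$:
\[
\delta_t(G, \Phi^{m+n}(G)) \leq \delta_t(G, \Phi^m(G)) \cdot \delta_t(\Phi^m(G), \Phi^m(\Phi^n(G))) \leq \delta_t(G, \Phi^m(G)) \cdot \delta_t(G, \Phi^n(G)).
\]
Setting $a_n := \log \delta_t(G, \Phi^n(G)) \in [-\infty, \infty)$, this reads $a_{m+n} \leq a_m + a_n$, and Fekete's lemma produces $\lim_{n \to \infty} a_n / n = \inf_n a_n / n \in [-\infty, \infty)$.

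For independence from the choice of $(G, G')$, I would compare with another pair $(G_1, G_1')$ via submultiplicativity (twice) and functoriality (once):
\[
\delta_t(G, \Phi^n(G')) \leq \delta_t(G, G_1) \cdot \delta_t(G_1, \Phi^n(G_1')) \cdot \delta_t(G_1', G').
\]
Taking $\log$, dividing by $n$, and sending $n \to \infty$, the $n$-independent factors $\delta_t(G, G_1)$ and $\delta_t(G_1', G')$ contribute $0$, yielding one inequality; symmetry gives the reverse. Finally, $\hcat(\Phi) \geq 0$ because at $t = 0$ the quantity $\delta_0(G, E)$ equals the minimum number of cones in a filtration building $E \oplus E'$, which is at least $1$ whenever $E \neq 0$; hence $a_n \geq 0$ whenever $\Phi^n(G) \neq 0$, and $h_0(\Phi) \geq 0$. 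The main technical obstacle is really just in the submultiplicativity lemma: one has to carefully propagate the shifts $[m_j]$ through the inserted subfiltrations and exploit the ``up to direct summand'' slack in the definition of $\delta_t$ to handle the mismatch between $G_2$ (what the outer filtration provides at step $j$) and $G_2 \oplus G_2''$ (what the inner filtration actually builds). Once this splicing is set up properly, the rest is Fekete's lemma applied to two subadditive sequences.
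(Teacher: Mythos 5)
The paper does not prove this statement; it is quoted verbatim from \cite{DHKK}, so there is no internal proof to compare against. Your argument is correct and is essentially the original one in \cite{DHKK}: the two preparatory lemmas (submultiplicativity via splicing towers, using the direct-summand slack to absorb the $G_2''$ mismatch, and functoriality via pushing towers through $\Phi$) combined with Fekete's lemma for the pair $(G,G)$ and the sandwich comparison for an arbitrary pair $(G,G')$ is exactly how the existence and independence are established there, and your observation that $\delta_0(G,E)\geq 1$ for $E\neq 0$ gives nonnegativity of $h_0$.
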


There is another characterization of categorical entropy.

\begin{dfn}$($\cite{DHKK}$)$
Assume that $\D$ is Ext-finite. For objects $E,F \in \D$ and a real number $t \in \mathbb{R}$, we set
\[ \delta'(E, F):=\sum_{m \in \ZZ}\ext^m(E,F)e^{-mt}. \]
For simplicity, we put $\delta'(E,F):=\delta'_0(E,F)$.
\end{dfn}
We will use Theorem \ref{delta'} in the proof of main results.
\begin{thm}\label{delta'} $($\cite{DHKK}$)$
Assume that $\D$ is a smooth proper dg category. Let $G$ and $G'$ be split generators of $\D$. For an exact endofunctor $\Phi : \D \to \D$, we have
\[h_t(\Phi)=\lim_{n \to \infty}\log\delta^\prime(G, \Phi^n(G'))/n.\]
\end{thm}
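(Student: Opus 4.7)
The plan is to sandwich $\delta^\prime(G,\Phi^n(G'))$ between $\delta_t(G,\Phi^n(G'))$ and a constant multiple of itself (the constant depending only on $G$, not on $n$), so that after taking $\log$ and dividing by $n$ the two limits coincide, and the limit in the theorem exists and equals $h_t(\Phi)$.

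First I would record three properties of $\delta^\prime$ (viewed as a function of the weight $t$): subadditivity along exact triangles $E_1\to E_2\to E_3$, which follows termwise from the long exact sequence of $\mathrm{Ext}^*(G,-)$ and the inequality $\ext^m(G,E_2)\leq \ext^m(G,E_1)+\ext^m(G,E_3)$; the shift rule $\delta^\prime(G,F[n])=e^{nt}\delta^\prime(G,F)$; and monotonicity under passage to direct summands. Along any filtration $0=E_0\to E_1\to\cdots\to E_k=E\oplus E'$ with cones $G[n_i]$, these properties give
\[
\delta^\prime(G,E)\ \leq\ \delta^\prime(G,E\oplus E')\ \leq\ \sum_{i=1}^{k}\delta^\prime(G,G[n_i])\ =\ \delta^\prime(G,G)\sum_{i=1}^{k}e^{n_i t}.
\]
Taking the infimum over filtrations and setting $E=\Phi^n(G')$ yields $\delta^\prime(G,\Phi^n(G'))\leq \delta^\prime(G,G)\,\delta_t(G,\Phi^n(G'))$; dividing by $n$ and letting $n\to\infty$ gives the upper bound $\limsup_n n^{-1}\log\delta^\prime(G,\Phi^n(G'))\leq h_t(\Phi)$.

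The reverse inequality is the substantive half, and this is where the smooth proper dg hypothesis enters. The plan is to produce a Koszul-type presentation of any $F\in\D$ as a finite iterated extension of shifts of a split generator $G^\vee$ dual to $G$ (obtained from the Serre functor, which exists by smooth properness), whose multiplicity in degree $-n$ is at most $\ext^{-n}(G^\vee,F)$, giving $\delta_t(G^\vee,F)\leq \delta^\prime(G^\vee,F)$. The triangle inequality $\delta_t(G,F)\leq \delta_t(G,G^\vee)\,\delta_t(G^\vee,F)$ then transfers this bound to $G$. Finally, expressing $G^\vee$ itself as a finite iterated extension of shifts of $G$ and applying subadditivity of $\delta^\prime$ in its first slot yields $\delta^\prime(G^\vee,F)\leq C_1(G,G^\vee)\,\delta^\prime(G,F)$. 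Combining, $\delta_t(G,F)\leq C(G)\,\delta^\prime(G,F)$ with $C(G)$ independent of $F$; substituting $F=\Phi^n(G')$ produces the matching lower bound for the limit.

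The main obstacle is the minimal twisted-complex presentation used to get $\delta_t(G^\vee,F)\leq \delta^\prime(G^\vee,F)$: verifying that the multiplicity of $G^\vee[n]$ is controlled exactly by $\ext^{-n}(G^\vee,F)$ requires a minimal A-infinity model of $F$ as a perfect module over the proper dga $\mathrm{End}^\bullet(G^\vee)$, which is where smooth properness genuinely enters (via Kadeishvili-style minimal-model theorems together with finiteness of cohomological amplitude). Once this is granted, the two inequalities sandwich $n^{-1}\log\delta^\prime(G,\Phi^n(G'))$ between quantities both converging to $h_t(\Phi)$ (using existence of the $\delta_t$ limit from the preceding DHKK theorem), so the desired limit exists and equals $h_t(\Phi)$.
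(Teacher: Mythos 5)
The paper does not actually prove this statement: it is quoted verbatim from \cite{DHKK} (their comparison of $\delta_t$ with the Ext-characterization), so your proposal can only be measured against the argument there. Your overall strategy --- sandwiching $\delta'_t(G,\Phi^n(G'))$ between $\delta_t(G,\Phi^n(G'))$ and a constant multiple of it --- is exactly the right one, and your easy direction is fine: subadditivity of $\delta'_t(G,-)$ along triangles, the shift rule $\delta'_t(G,F[n])=e^{nt}\delta'_t(G,F)$, and monotonicity under summands do give $\delta'_t(G,F)\le\delta'_t(G,G)\,\delta_t(G,F)$, hence the upper bound for the limit.

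The hard direction, however, contains a genuine gap: the key inequality $\delta_t(G^\vee,F)\le\delta'_t(G^\vee,F)$ is simply false. Take $\D=\Db(\mathbb{P}^1)$, $G^\vee=\OO\oplus\OO(1)$, $F=\OO(-1)$: then $\sum_m\ext^m(G^\vee,F)=h^1(\OO(-2))=1$, while $\OO(-1)$ is not a direct summand of any shift of $G^\vee$, so every admissible filtration needs at least two steps and $\delta_0(G^\vee,F)=2>1$. The underlying confusion is between two different objects: a minimal $A_\infty$-model of the module $\RHom(G^\vee,F)$ over $\mathrm{End}^\bullet(G^\vee)$ does have underlying graded space $\Ext^*(G^\vee,F)$, but it is \emph{not} a presentation of $F$ as an iterated cone of shifts of $G^\vee$; the latter corresponds to a semifree resolution, whose minimal ranks are Betti numbers (Tor against the simple quotients) and are in general neither equal to nor bounded by $\ext^*(G^\vee,F)$. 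Relatedly, Kadeishvili-type minimal models exist for any dg algebra over a field, so they cannot be ``where smooth properness genuinely enters.'' The missing ingredient, and the one \cite{DHKK} actually use, is that smoothness makes the diagonal bimodule perfect: fix once and for all a finite presentation of (a retract of) the diagonal as an iterated cone of box-products $M_i\boxtimes N_i$; convolving with $F$ exhibits $F\oplus F'$ as an iterated cone of the objects $\RHom(M_i,F)\otimes_k N_i$, whence $\delta_t(G,F)\le\sum_i\delta'_t(M_i,F)\,\delta_t(G,N_i)\le C(G)\,\delta'_t(G,F)$ with $C(G)$ independent of $F$, since the $M_i$ and $N_i$ are fixed. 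For $\mathbb{P}^n$ this is precisely the Beilinson resolution, which is presumably the ``Koszul-type presentation'' you had in mind --- but note that even there the multiplicity of the dual object $N_i$ is $\ext^*(M_i,F)$ computed against the \emph{original} collection, not $\ext^*(N_i,F)$ against the dual one. With that lemma in place, the rest of your argument (transferring between generators and concluding that both limits coincide) goes through.
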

At the end of this section, we recall the following facts.

\begin{rem}
\begin{itemize}
\item[(1)]Let $\D$ be the perfect derived category of a smooth proper dg algebra $A$.  Then $A \in \D$ is a split generator. \rm{}(\cite{KS})

\item[(2)] \it{}Let $X$ be a smooth projective variety over a field $k$. Let $\OO_X(1)$ be a very ample line bundle on $X$. For $k \in \ZZ$, an object 
$G:= \oplus_{i=0}^{\dim X+1}\OO_X(i+k)$ is a split generator of $D^b(X)$. \rm{}(\cite{Or0})
\end{itemize}
\end{rem}

\subsection{Mukai lattices of K3 surfaces}
In this subsection, we recall the relation between numerical Grothendieck groups and algebraic Mukai lattices of K3 surfaces. 
Let $X$ be a K3 surface. 
\begin{dfn}
We define the algebraic Mukai lattice  $\widetilde{H}^{1,1}(X,\mathbb{Z})$ of $X$ as follow.
\[ \widetilde{H}^{1,1}(X,\mathbb{Z}):= H^0(X,\ZZ) \oplus \mathrm{NS}(X) \oplus H^4(X,\ZZ) \]
The lattice structure is defined by 
\[  \langle(r_1,c_1,m_1),(r_2,c_2,m_2) \rangle := c_1c_2-r_1m_2-r_2m_1  \]
for $(r_1,c_1,m_1),(r_2,c_2,m_2)  \in  \widetilde{H}^{1,1}(X,\mathbb{Z})$.
\end{dfn}

\begin{rem}\rm{(\cite{Huy}, Section 5.2.)}\it{}
The homomorphism 
\[ v : K_{\mathrm{num}}(X) \to \widetilde{H}^{1,1}(X,\mathbb{Z}), [E] \mapsto \mathrm{ch}(E)\sqrt{\mathrm{td}(X)}\]
induces the isometry between $(K_{\mathrm{num}}(X), -\chi)$ and $(\widetilde{H}^{1,1}(X,\mathbb{Z}), \langle-, -\rangle)$ by Riemann-Roch formula. 
For an autoequivalence $\Phi \in \Aut(D^b(X))$, there is the following diagram.
$$\begin{CD}
K_{\mathrm{num}}(X) @> [\Phi] >>K_{\mathrm{num}}(X)\\
@V v VV @VV v V\\
\widetilde{H}^{1,1}(X,\mathbb{Z})  @> \Phi^H  >> \widetilde{H}^{1,1}(X,\mathbb{Z})
\end{CD}$$
In particular, we have $\rho(\Phi^H)=\rho([\Phi])$.
\end{rem}
 \section{Computation of entropy of spherical twists}
Let $\D$ be a perfect derived category of a smooth proper dg algebra. Take a split generator $G \in \D$. For a $d$-spherical object $\E \in \D$, the spherical twist functor $T_\E$ fits into the following exact triangle.
\begin{equation*}
\RHom(\E,-) \otimes \E \to \mathrm{id} \to T_\E 
\end{equation*}
\begin{thm}\label{main}
Let $\E \in \D$ be a $d$-spherical object. 
For $t  \leq 0$, we have $h_t(T_\E)=(1-d)t$. In particular, $\hcat(T_\E)=0$ holds.
Assume that $d=1$ or $^{\perp}\E \neq \emptyset$. Then we have $h_t(T_\E)=0$ for $t>0$.
\end{thm}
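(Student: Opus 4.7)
The strategy is to invoke Theorem \ref{delta'}: for any split generator $G \in \D$ and any $t \in \mathbb{R}$,
\[ h_t(T_\E) = \lim_{n \to \infty} \frac{\log \delta'_t(G, T_\E^n G)}{n}. \]
Set $G_n := T_\E^n G$; everything is driven by the defining triangle $\RHom(\E, G_n) \otimes \E \to G_n \to G_{n+1}$ of the spherical twist. A key preliminary calculation applies $\RHom(\E, -)$ to this triangle: the resulting map $\RHom(\E, G_n) \otimes \RHom(\E, \E) \to \RHom(\E, G_n)$ is the right composition action, and its restriction to the summand $\RHom(\E, G_n) \otimes k \cdot \mathrm{id}_\E$ is the identity. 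Hence the triangle splits off an identity, leaving $\RHom(\E, G_{n+1}) \cong \RHom(\E, G_n)[1-d]$, and by induction
\[ \delta'_t(\E, G_n) = e^{n(1-d)t}\, \delta'_t(\E, G). \]

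For the upper bound on $\delta'_t(G, G_n)$ I apply $\RHom(G, -)$ to the defining triangle; the long exact sequence yields the recursion
\[ \delta'_t(G, G_{n+1}) \leq \delta'_t(G, G_n) + e^t\, \delta'_t(\E, G_n)\, \delta'_t(G, \E), \]
and summing the resulting geometric series in $e^{(1-d)t}$ bounds the growth rate of $\delta'_t(G, G_n)$ by $\max\{(1-d)t,\, 0\}$ in each of the regimes $t \leq 0$, $d = 1$, and $d \geq 2$ with $t > 0$. The matching lower bound for $t \leq 0$ comes from the standard complexity inequality $\delta'_t(\E, G_n) \leq \delta_t(G, \E)\, \delta'_t(G, G_n)$, valid because $\E$ lies in the thick subcategory split-generated by $G$; combined with the formula for $\delta'_t(\E, G_n)$ this forces $h_t(T_\E) \geq (1-d)t$. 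These two bounds coincide for $t \leq 0$, proving $h_t(T_\E) = (1-d)t$ and in particular $\hcat(T_\E) = 0$.

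For $t > 0$ the remaining inequality $h_t(T_\E) \geq 0$ is trivial when $d = 1$, since then $(1-d)t = 0$; so the content lies in the case $d \geq 2$. Here I pick a nonzero $F \in {}^{\perp}\E$, so that $\RHom(F, \E) = 0$; applying $\RHom(F, -)$ to the defining triangle collapses it to an isomorphism, and $\RHom(F, G_n) \cong \RHom(F, G)$ for every $n$. This is a nonzero constant (any vanishing would propagate along the build of $F$ from the split generator $G$ to force $\RHom(F, F) = 0$), so $\delta'_t(F, G_n) \leq \delta_t(G, F)\, \delta'_t(G, G_n)$ delivers a uniform positive lower bound on $\delta'_t(G, G_n)$, whence $h_t(T_\E) \geq 0$. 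The principal obstacle is exactly this last step: without a left-orthogonal object, nothing prevents $\delta'_t(G, G_n)$ from decaying exponentially for $t > 0$, which is precisely why the appendix is invoked to secure $^{\perp}\E \neq \emptyset$ in the geometric settings of interest.
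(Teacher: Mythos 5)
Your proposal is correct and follows essentially the same route as the paper: the same defining triangle gives $\RHom(\E, T_\E^{n}G) \cong \RHom(\E,G)[n(1-d)]$ and the same recursive upper bound on $\delta'_t(G, T_\E^{n}G)$, while the lower bounds come from an object ($\E$ itself for $t\le 0$, resp.\ $F \in {}^{\perp}\E$ for $t>0$) whose $\delta'_t$ against $T_\E^{n}G$ is explicitly computable. The only cosmetic difference is that you transfer these lower bounds to $\delta'_t(G, T_\E^{n}G)$ via the complexity inequality (where the constant should be $\delta_{-t}(G,F)$ rather than $\delta_t(G,F)$ --- harmless, as only its finiteness and positivity matter), whereas the paper simply enlarges the split generator to $G\oplus\E$, resp.\ $G\oplus F$, and bounds $\delta'_t(G', T_\E^n(G\oplus F)) \ge \delta'_t(G', T_\E^n F)$.
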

\begin{proof}
Let $G, G^\prime \in \D$ be split generators. Using the exact triangle with respect to the spherical twist, we have 
\[ T_\E^{n-1}(G) \to T_\E^n(G) \to \RHom(\E,G) \otimes \E[(n-1)(1-d)+1]. \] 
Here, note that 
\begin{eqnarray*}
\RHom(\E,T_\E^{n-1}(G))&=&\RHom(T_\E^{-(n-1)}(\E),G)\\
&=&\RHom(\E,G[(n-1)(1-d)])\\
&=&\RHom(\E,G)[(n-1)(1-d)].
\end{eqnarray*}
Fix $t \in \RR$.
Then we have 
\begin{align*}
&\delta^\prime_t(G^\prime, T_\E^n(G))\\
&\leq \delta^\prime_t(G^\prime, T_\E^{n-1}(G))+\delta^\prime_t(G^\prime, \RHom(\E,G) \otimes \E[(n-1)(1-d)+1])\\
&=\delta^\prime_t(G^\prime, T_\E^{n-1}(G))+ \delta^\prime_t(\E,G)\delta^\prime_t(G^\prime,\E)e^{(n(1-d)+d)t}\\
&\leq \delta^\prime_t(G^\prime,G)+ \delta^\prime_t(\E,G)\delta^\prime_t(G^\prime,\E)\sum_{k=1}^{n}e^{(k(1-d)+d)t}\\
&\leq \delta^\prime_t(G^\prime,G)+ n\delta^\prime_t(\E,G)\delta^\prime_t(G^\prime,\E)f_n(t).
\end{align*}

Here, we put $f_n(t):=\mathrm{max}\{1, e^{(n(1-d)+d)t}\}$. 
We assume that $d\geq2$ and $t \leq 0$. Due to $f_n(t)=e^{(n(1-d)+d)t}$,
we have 
\[ h_t(T_\E) \leq (1-d)t.\]
Since $G \oplus \E$ is a split generator of $\D$,
\begin{eqnarray*}
h_t(T_\E)&=& \lim_{n \to \infty}\log\delta^\prime_t(G^\prime, T_\E^n(G \oplus \E ))/n\\
&\geq&\lim_{n \to \infty}\log\delta^\prime_t(G^\prime, T_\E^n(\E ))/n\\
&=&\lim_{n \to \infty}\log\delta^\prime_t(G^\prime, \E[n(1-d)])/n\\
&=&\lim_{n \to \infty}\log\delta^\prime_t(G^\prime, \E )e^{n(1-d)t}/n\\
&=&(1-d)t.
\end{eqnarray*}
Hence, we obtain $h_t(T_\E)=(1-d)t$.
If $d=1$, then we have $h_t(T_\E) \leq 0$. Note that $T_\E(\E)=\E$ and put $A:=\E$. 
If $d\geq2, t>0$, then we have $f_n(t)=1$. So we obtain 
\[h_t(T_\E) \leq 0.\]
If we assume that $^\perp\E \neq \emptyset$,  we can take $A \in ^\perp\E$ and note that $T_\E(A)=A$.
Since $G \oplus A$ is a split generator of $\D$,
\begin{eqnarray*}
h_t(T_\E)&=& \lim_{n \to \infty}\log\delta^\prime_t(G^\prime, T_\E^n(G \oplus A))/n\\
&\geq&\lim_{n \to \infty}\log\delta^\prime_t(G^\prime, T_\E^n(A ))/n\\
&=&\lim_{n \to \infty}\log\delta^\prime_t(G^\prime, A)/n\\
&=&0
\end{eqnarray*}
Hence, we obtain $h_t(T_\E)=0$.
\end{proof}
We can immediately give examples of spherical objects satisfying the assumption in Theorem
\ref{main}; see Proposition A.1 for more cases.
\begin{ex}
Let $X$ be a K3 surface with a spherical vector bundle $\E$ of rank $r \geq 2$.  Then there is a vector bundle $A$ such that $\E\mathrm{nd}(\E,\E)=A \oplus \OO_X$. Then we have $\RHom(\OO_X, A)=0$ i.e. $A \in {^{\perp}\OO}_X$.
\end{ex}

\begin{ex}
Let $X$ be a smooth projective surface. For a $(-2)$ curve $C$ on $X$, the structure sheaf $\OO_C$ of $C$ is a spherical object in $D^b(X)$. For a point $x \in X \setminus C$,  we have $\OO_x \in  {^{\perp}\OO_C}$.
\end{ex}

\section{Counterexample of Gromov-Yomdin type conjecture for K3 surfaces}
In this section, we give  counterexamples of Gromov-Yomdin type conjecture (Conjecture \ref{KTconj}) for K3 surfaces.
Let $X$ be a K3 surface and $\OO_X(1)$ be a very ample line bundle on $X$. We set $H:=\mathrm{c_1}(\OO_X(1))$ snd $2d:=H^2$. We take two split generators \[G:= \bigoplus_{i=1}^{3}\OO_X(i), G':=\bigoplus_{i=1}^{3}\OO_X(-i)\]
of $D^b(X)$.
We consider the autoequivalence $\Phi:=T_{\OO_X} \circ (- \otimes \OO_X(-1))$ of $D^b(X)$.
\begin{lem}\label{ext}
For $i, k \in \ZZ_{>0}$, we have
\[ \Ext^m(\OO_X, \Phi^n(\OO_X(-i)) \otimes \OO_X(-k))=0\]
for $m \notin [2, n+2]$.
Moreover, we get the isomorphism
\[ \Ext^{n+2}(\OO_X, \Phi^n(\OO_X(-i)) \otimes \OO_X(-k)) \simeq \Ext^{n+1}(\OO_X, \Phi^{n-1}(\OO_X(-i)) \otimes \OO_X(-1))^{h^0(\OO_X(k))}.\]\end{lem}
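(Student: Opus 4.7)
The plan is induction on $n \geq 0$ via the defining triangle of the spherical twist $T_{\OO_X}$. The base case $n = 0$ reduces to $\Ext^m(\OO_X, \OO_X(-i-k)) = H^m(\OO_X(-i-k))$, which vanishes for $m \neq 2$ by Kodaira vanishing and Serre duality on the K3 surface $X$, since $i + k \geq 2$.

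For the inductive step with $n \geq 1$, I would use the identity $\Phi^n(\OO_X(-i)) = T_{\OO_X}\bigl(\Phi^{n-1}(\OO_X(-i)) \otimes \OO_X(-1)\bigr)$ to obtain the distinguished triangle
\[
\RHom\bigl(\OO_X, \Phi^{n-1}(\OO_X(-i)) \otimes \OO_X(-1)\bigr) \otimes \OO_X \to \Phi^{n-1}(\OO_X(-i)) \otimes \OO_X(-1) \to \Phi^n(\OO_X(-i)).
\]
Tensoring with $\OO_X(-k)$ and applying $\RHom(\OO_X, -)$ produces a long exact sequence relating $C^m := \Ext^m(\OO_X, \Phi^n(\OO_X(-i)) \otimes \OO_X(-k))$ to the two flanking terms
\[
B^m := \Ext^m\bigl(\OO_X, \Phi^{n-1}(\OO_X(-i)) \otimes \OO_X(-1-k)\bigr),
\]
\[
A^m := \Ext^m\bigl(\OO_X, \RHom(\OO_X, \Phi^{n-1}(\OO_X(-i)) \otimes \OO_X(-1)) \otimes \OO_X(-k)\bigr).
\]
The inductive hypothesis (applied to $\OO_X(-(1+k))$, valid since $1+k \geq 1$) gives $B^m = 0$ outside $[2, n+1]$, while using $\RHom(\OO_X, \OO_X(-k)) = H^0(\OO_X(k))^*[-2]$ one can rewrite
\[
A^m \cong \Ext^{m-2}\bigl(\OO_X, \Phi^{n-1}(\OO_X(-i)) \otimes \OO_X(-1)\bigr) \otimes H^0(\OO_X(k))^*,
\]
which by the inductive hypothesis (with $k$ replaced by $1$) vanishes outside $[4, n+3]$. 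A short boundary inspection of the long exact sequence then forces $C^m = 0$ for $m \notin [2, n+2]$ and identifies $C^{n+2} \cong A^{n+3}$, which is precisely the isomorphism claimed in the lemma.

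The only real difficulty is bookkeeping of cohomological shifts: one must verify that the boundary indices $2$, $n+2$, $n+3$ land as expected in the ranges supplied by the two applications of the induction hypothesis. Beyond Kodaira vanishing on $X$, no further geometric input is required.
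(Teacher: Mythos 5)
Your proof is correct and follows essentially the same route as the paper: induction on $n$ using the defining triangle of $T_{\OO_X}$ applied to $\Phi^{n-1}(\OO_X(-i))\otimes\OO_X(-1)$, tensoring with $\OO_X(-k)$, applying $\RHom(\OO_X,-)$, and using $\RHom(\OO_X,\OO_X(-k))\simeq H^0(\OO_X(k))^*[-2]$ (Kodaira vanishing plus Serre duality) to control the outer terms of the long exact sequence. The only difference is that you spell out the index bookkeeping ($A^m=0$ outside $[4,n+3]$, $B^m=0$ outside $[2,n+1]$, hence $C^{n+2}\cong A^{n+3}$) which the paper leaves implicit.
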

\begin{proof}
We prove it by induction on $n$.
Consider the canonical exact triangle.
\begin{align*}
\RHom(\OO_X, \Phi^{n-1}(\OO_X(-i)) &\otimes \OO_X(-1)) \otimes \OO_X \\ &\to \Phi^{n-1}(\OO_X(-i)) \otimes \OO_X(-1)
 \to \Phi^n(\OO_X(-i)). 
  \end{align*}
  Applying $\otimes \OO_X(-k)$ and $\RHom(\OO_X, -)$, we get the exact triangle
  \begin{align*}
&\RHom(\OO_X, \Phi^{n-1}(\OO_X(-i)) \otimes \OO_X(-1)) \otimes \RHom(\OO_X, \OO_X(-k))\to \\ &\quad \RHom(\OO_X, \Phi^{n-1}(\OO_X(-i)) \otimes \OO_X(-k-1))
 \to \RHom(\OO_X, \Phi^n(\OO_X(-i))\otimes \OO_X(-k)). 
  \end{align*}
  Note that $\RHom(\OO_X, \OO_X(-k))=\Ext^2(\OO_X, \OO_X(-k))[-2]$ by Kodaira vanishing theorem.
  Due to induction hypothesis, we get the desired results by taking the long exact sequence and Serre duality.
\end{proof}

\begin{lem}\label{a}
For $i\in \ZZ_{>0}$, we have
\[ \delta'(\OO_X, \Phi^n(\OO_X(-i)) \otimes \OO_X(-1)) \geq (d+2)^{n}.\]
\end{lem}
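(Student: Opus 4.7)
The plan is to extract a single large graded component of $\delta'$ and bound it using Lemma~\ref{ext} as a recursion, with the growth factor coming from $h^0(\OO_X(1))$ on the K3 surface.

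First, since $\delta'(\OO_X,F)=\sum_m \ext^m(\OO_X,F)$ is a sum of non-negative integers, it suffices to bound a single term from below. The natural candidate is the top-degree term $\ext^{n+2}$, because Lemma~\ref{ext} with $k=1$ already provides an exact relation for this particular degree, namely
\[
\Ext^{n+2}\!\bigl(\OO_X,\Phi^n(\OO_X(-i))\otimes \OO_X(-1)\bigr)\ \simeq\ \Ext^{n+1}\!\bigl(\OO_X,\Phi^{n-1}(\OO_X(-i))\otimes \OO_X(-1)\bigr)^{h^0(\OO_X(1))}.
\]
So I would take $\delta'(\OO_X,\Phi^n(\OO_X(-i))\otimes \OO_X(-1))\geq \ext^{n+2}(\OO_X,\Phi^n(\OO_X(-i))\otimes \OO_X(-1))$ and work with the right-hand Ext alone.

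Second, I would compute $h^0(\OO_X(1))=d+2$. Since $H^2=2d$ and $X$ is a K3 surface, Riemann--Roch gives $\chi(\OO_X(1))=2+H^2/2=d+2$, while Kodaira vanishing kills $h^1$ and $h^2$ for the very ample line bundle $\OO_X(1)$. Plugging this into the isomorphism above and iterating $n$ times on $n$ turns the displayed identity into
\[
\ext^{n+2}\!\bigl(\OO_X,\Phi^n(\OO_X(-i))\otimes \OO_X(-1)\bigr)\ =\ (d+2)^n\cdot \ext^{2}\!\bigl(\OO_X,\OO_X(-i-1)\bigr).
\]

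Third, I need the base case to be at least $1$. By Serre duality on the K3 surface, $\ext^2(\OO_X,\OO_X(-i-1))=h^0(\OO_X(i+1))\geq 1$ whenever $i\geq 0$. Combining with the iterated recursion gives the claimed lower bound $(d+2)^n$. I do not anticipate a serious obstacle: all the technical content is packaged into Lemma~\ref{ext}, and the only care needed is to line up the shift in the recursion index $n$ and to verify the base case via Riemann--Roch and Serre duality. The sharpness of the estimate — which will matter for the entropy computation — comes precisely from the fact that a single graded piece of $\delta'$ already grows like $(d+2)^n$.
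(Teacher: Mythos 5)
Your proof is correct and is essentially identical to the paper's: both lower-bound $\delta'$ by the single top-degree term $\ext^{n+2}$, iterate the isomorphism from Lemma~\ref{ext} with $k=1$ to obtain the factor $h^0(\OO_X(1))^n=(d+2)^n$, and settle the base case $\ext^2(\OO_X,\OO_X(-i-1))=h^0(\OO_X(i+1))\geq 1$ via Serre duality, with $h^0(\OO_X(1))=\chi(\OO_X(1))=d+2$ from Kodaira vanishing and Riemann--Roch. No gaps.
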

\begin{proof}
By Lemma \ref{ext}, there is the isomorphism
  \[ \Ext^{n+2}(\OO_X , \Phi^n(\OO_X(-i))\otimes \OO_X(-1)) \simeq \Ext^{n+1}(\OO_X, \Phi^{n-1}(\OO_X(-i)) \otimes \OO_X(-1))^{h^0(\OO_X(1))}.\]
 
  Hence, we get 
  \begin{align*}
&\delta'(\OO_X, \Phi^n(\OO_X(-i)) \otimes \OO_X(-1))\\
&\geq\ext^{n+2}(\OO_X, \Phi^n(\OO_X(-i))\otimes \OO_X(-1)))\\
&=\ext^{n+1}(\OO_X, \Phi^{n-1}(\OO_X(-i))\otimes \OO_X(-1)))\cdot h^0(\OO_X(1))\\
&=\ext^2(\OO_X, \OO_X(-i-1)) \cdot h^0(\OO_X(1))^n \\
&\geq h^0(\OO_X(1))^n . 
  \end{align*} 
  Note that $\ext^2(\OO_X(-i-1))=h^0(\OO_X(i+1))\geq1$.
Finaly, we have 
   \begin{eqnarray*}
h^0(\OO_X(1))&=&\chi(\OO_X(1))\\
&=&d+2
\end{eqnarray*} 
by Kodaira vanishing and Riemann-Roch formula.   \end{proof}

\begin{prop}
We have 
\[ h_0(\Phi) \geq \log \rho(d+2) \]
and 
\[ \rho(\Phi^H) = \begin{cases}
    1 & (d =1,2,3,4) \\
   \frac{d-2+\sqrt{d^2-4d}}{2}  & (d \geq 5).
  \end{cases}\]
  In particular, we obtain the inequality $h_0(\Phi) > \log \rho(\Phi^H)$.
  \end{prop}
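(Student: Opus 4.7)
The plan is to establish the two claims—the lower bound $h_0(\Phi) \geq \log(d+2)$ and the explicit form of $\rho(\Phi^H)$—independently, and then combine them. For the entropy bound I apply Theorem~\ref{delta'} to the given split generators. Since $\delta'$ is additive on direct summands in both arguments, and tensoring by a line bundle is an autoequivalence,
\begin{align*}
\delta'(G, \Phi^n(G'))
  &\geq \delta'(\OO_X(1),\, \Phi^n(\OO_X(-1))) \\
  &= \delta'(\OO_X,\, \Phi^n(\OO_X(-1)) \otimes \OO_X(-1)) \\
  &\geq (d+2)^n,
\end{align*}
where the final inequality is Lemma~\ref{a} with $i=1$. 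Taking $\log$, dividing by $n$, and letting $n \to \infty$ yields $h_0(\Phi) \geq \log(d+2)$.

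For the spectral radius, I compute $\Phi^H$ on the algebraic Mukai lattice. Let $v_0 = (1,0,0)$, $v_1 = (0, H, 0)$, $v_2 = (0, 0, 1)$, and note that both $(-\otimes \OO_X(-H))^H$ and $T_{\OO_X}^H$ preserve the sublattice $L = \ZZ v_0 \oplus \ZZ v_1 \oplus \ZZ v_2$ and act as the identity on its orthogonal complement inside $\widetilde{H}^{1,1}(X,\ZZ)$ (the classes $(0, c, 0)$ with $c \cdot H = 0$). On $L$, the first factor acts by multiplication with $\mathrm{ch}(\OO_X(-H)) = 1 - H + d\,[\mathrm{pt}]$, and the second by the Mukai reflection $v \mapsto v + \langle v, v(\OO_X)\rangle\, v(\OO_X)$. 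Composing the resulting $3\times 3$ matrices yields
\[
\Phi^H\big|_L \;=\; \begin{pmatrix} -d & 2d & -1 \\ -1 & 1 & 0 \\ -1 & 0 & 0 \end{pmatrix},
\]
whose characteristic polynomial factors as $-(\lambda + 1)(\lambda^2 + (d-2)\lambda + 1)$. The quadratic factor has root-product $1$: for $d \in \{1,2,3,4\}$ the discriminant $d^2 - 4d$ is nonpositive and both roots are unimodular, so $\rho(\Phi^H) = 1$; for $d \geq 5$ the roots are real and the dominant one has absolute value $\frac{d - 2 + \sqrt{d^2 - 4d}}{2}$.

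The strict inequality $h_0(\Phi) > \log\rho(\Phi^H)$ is then immediate: for $d \leq 4$ it reduces to $\log(d+2) > 0$, while for $d \geq 5$ the estimate $d + 2 > \frac{d-2 + \sqrt{d^2-4d}}{2}$ rearranges to $d + 6 > \sqrt{d^2-4d}$ and squares to the trivial $16d + 36 > 0$. The main obstacle is not conceptual but computational: one has to track signs in the Mukai pairing and in the reflection/tensor formulas carefully to obtain the $3 \times 3$ matrix above, and one has to justify the block-diagonal structure on the full Mukai lattice—this is automatic since $\Phi$ is built only from $\OO_X$ and $\OO_X(H)$, so each factor fixes the $H$-perpendicular part of $\mathrm{NS}(X)$ pointwise.
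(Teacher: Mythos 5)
Your proposal is correct and follows essentially the same route as the paper: the same reduction of $\delta'(G,\Phi^n(G'))$ to $\delta'(\OO_X,\Phi^n(\OO_X(-1))\otimes\OO_X(-1))$ combined with Lemma~\ref{a}, and the same $3\times 3$ matrix on the sublattice spanned by $(1,0,0),(0,H,0),(0,0,1)$ with the identity action on $H^\perp$. Your explicit factorization of the characteristic polynomial as $-(\lambda+1)(\lambda^2+(d-2)\lambda+1)$ and the verification of the strict inequality are just welcome details the paper leaves implicit.
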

  \begin{proof}
  Using $\delta'(G, \Phi^n(G')) \geq \delta'(\OO_X, \Phi^n(\OO_X(-1)) \otimes \OO_X(-1))$ 
and Lemma \ref{a}, we have 
\begin{eqnarray*}
h_0(\Phi)
&=&\lim_{n \to \infty}\log\delta^\prime(G, \Phi^n(G'))/n\\
&\geq&\log h^0(\OO_X(1))\\
&=& \log(d+2). 
\end{eqnarray*}
We compute the spectral radius $\rho(\Phi^H)$. Consider the sublattice
\[L_d:=H^0(X,\ZZ) \oplus \ZZ \cdot H \oplus H^4(X,\ZZ) \]  
of $\widetilde{H}^{1,1}(X,\mathbb{Z})$.
The representation matrix of $\Phi^H|_{L_d}$ with respect to the standard basis $(1,0,0), (0,H,0), (0,0,1)$ is 
\[\begin{pmatrix}
                     -d &2d & -1 \\ -1 & 1 & 0\\ -1&0 & 0
                     \end{pmatrix}.\]

 Computing eigenvalues of it, we have 
 \[ \rho(\Phi^H|_{L_d}) = \begin{cases}
    1 & (d =1,2,3,4) \\
   \frac{d-2+\sqrt{d^2-4d}}{2}  & (d \geq 5).
  \end{cases}\]
Let $H^\perp$ be the orthogonal complement of $H$ in $\mathrm{NS}(X)$.  Since $\Phi^H(0,D,0)=(0,D,0)$ for $D \in H^\perp$,  we get $\rho(\Phi^H)=\rho(\Phi^H|_{L_d})$.
  \end{proof}

\section{Categorical entropy of surjective endomorphisms over algebraically closed field} 
In this section, we generalize Kikuta and Takahashi's result (Theorem \ref{cattop}) to any algebraically closed fields.
Let $X$ be a smooth projective variety over an algebraically closed field $k$. Let $f : X \to X$ be a surjective endomorphism of $X$. We recall Fujita vanishing theorem.

\begin{thm}\label{Fujita}$($\cite{Fuj}$)$
Let $H$ be an ample divisor on $X$. For a coherent sheaf $E$ on $X$, there is an integer $m(E, H)$ such that we have
\[H^i(E(mH+D))=0\]
for all $i>0, m \geq m(E,H)$ and any nef divisor $D$ on $X$.　
\end{thm}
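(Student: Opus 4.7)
The plan is to prove Fujita vanishing through two reductions followed by an induction on $n = \dim X$; the delicate point throughout is that every bound must be uniform in the nef divisor $D$, while bare Serre vanishing only provides a threshold depending on both $E$ and $D$. First I would reduce to the case of $H$ very ample: if the theorem is known for some multiple $kH$ with bound $m_0'$, then for $m \geq km_0'$ I write $m = qk + r$ with $q \geq m_0'$ and $0 \leq r < k$, observe that $rH + D$ remains nef, and apply the statement for $kH$ to $q(kH) + (rH + D)$. Next I would reduce to $E = \OO_X$: by Serre there is a surjection $\OO_X(-a_0 H)^{\oplus N_0} \twoheadrightarrow E$ for $a_0 \gg 0$, and iterating on kernels while truncating at cohomological dimension $n$ yields a finite resolution of $E$ by direct sums of negative twists $\OO_X(-aH)$; standard dimension shifting through the associated short exact sequences then converts uniform higher vanishing for $E$ into uniform higher vanishing for $\OO_X$ after absorbing $-aH$ into $mH$.

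I would then treat $E = \OO_X$ by induction on $n$, with trivial base case $n = 0$. For the inductive step I take a smooth hyperplane section $Y \in |H|$ (passing to a multiple of $H$ first, if needed for smoothness) and consider
\[ 0 \to \OO_X((m-1)H + D) \to \OO_X(mH + D) \to \OO_Y((mH + D)|_Y) \to 0. \]
The inductive hypothesis applied to $Y$, with the ample class $H|_Y$ and the nef class $D|_Y$, furnishes a uniform $m_1(Y)$ making the right-hand $H^{\geq 1}$ vanish for $m \geq m_1(Y)$. The associated long exact cohomology sequence then provides \emph{isomorphisms} $H^i(\OO_X((m-1)H + D)) \xrightarrow{\sim} H^i(\OO_X(mH + D))$ for $i \geq 2$ and $m \geq m_1(Y)$; combined with ordinary Serre vanishing applied at each fixed $D$, the common value is forced to equal the asymptotic value $0$, which yields uniform vanishing for $i \geq 2$ with bound $m_1(Y) - 1$. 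The $i = 1$ case requires the additional inductive input that the restriction map on global sections is surjective, which one sets up as a parallel part of the induction.

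The main obstacle is exactly this uniformity in $D$: the mechanism has to promote $D$-dependent Serre vanishing to a $D$-independent bound, and it works because the hyperplane-section sequence produces isomorphisms rather than merely surjections in the cohomology sequence for $i \geq 2$, so the uniform threshold $m_1(Y)$ coincides with the eventual zero value guaranteed by Serre. The subtlety of the $i = 1$ case reflects that surjectivity on $H^1$ alone does not propagate asymptotic vanishing back to a uniform threshold, which is why the inductive control of $H^0$-restriction surjectivity has to be built into the statement one carries through the induction on $\dim X$.
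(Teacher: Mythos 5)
First, note that the paper does not prove this statement: Theorem 5.1 is Fujita's vanishing theorem, quoted directly from \cite{Fuj} and used as a black box, so there is no internal proof to compare against and your outline has to stand on its own. Most of it does reproduce the standard architecture correctly: the reduction to $H$ very ample via $m = qk+r$ and the nefness of $rH+D$; the d\'evissage to $E=\OO_X$ by dimension-shifting through a Serre resolution (one should be slightly careful that the truncated resolution ends in an arbitrary kernel sheaf rather than a sum of line bundles, but the chain of injections terminates in $H^{n+1}=0$ regardless, so this is harmless); and the case $i\ge 2$, where the hyperplane-section sequence gives isomorphisms $H^i(\OO_X((m-1)H+D))\cong H^i(\OO_X(mH+D))$ for $m\ge m_1(Y)$ and ordinary Serre vanishing at each fixed $D$ forces the common value to be zero. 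All of that is sound.

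The gap is exactly at the point you flag as subtle: $i=1$. Your proposed fix --- carrying through the induction on $\dim X$ the surjectivity of the restriction maps $H^0(X,\OO_X(mH+D))\to H^0(Y,\OO_Y((mH+D)|_Y))$ --- is circular. The cokernel of that restriction map is precisely the kernel of $H^1(\OO_X((m-1)H+D))\to H^1(\OO_X(mH+D))$; its vanishing is obstructed by an $H^1$ on $X$ itself, not on $Y$, so the induction on dimension cannot supply it: establishing that surjectivity uniformly in $D$ is \emph{equivalent} to the $i=1$ vanishing you are trying to prove. (A Castelnuovo--Mumford-type argument does propagate such surjectivity from level $m$ to $m+1$, but it cannot produce the uniform-in-$D$ starting point.) What your argument actually yields for $i=1$ is only that $h^1(\OO_X(mH+D))$ is non-increasing in $m$ for $m\ge m_1(Y)$ and eventually zero for each fixed $D$ --- which gives no threshold independent of $D$, since the drop to zero can occur arbitrarily late. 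This uniformity for $i=1$ is the genuine content of Fujita's theorem, and it is the step where the published proofs (Fujita's original argument; Lazarsfeld's Positivity I, Theorem 1.4.35; Keeler's positive-characteristic version) must inject a further idea rather than run the hyperplane-section induction once more. As written, your proposal renames the problem rather than solving it.
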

\begin{thm}
Consider the derived pullback
\[ \Lf : D^b(X) \to D^b(X) \]
of $f$.
Then we have
\[ \hcat(\Lf)=\log \rho([\Lf]).\]
\end{thm}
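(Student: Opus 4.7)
The plan is to adapt Kikuta and Takahashi's proof of Theorem \ref{cattop} to arbitrary algebraically closed fields, substituting Fujita vanishing (Theorem \ref{Fujita}) for Kodaira vanishing at the key step. The lower bound $\hcat(\Lf) \ge \log \rho([\Lf])$ is already available from the Kikuta--Shiraishi--Takahashi result cited in the introduction, since $D^b(X)$ is the perfect derived category of a smooth proper dg algebra. Thus the work is in the matching upper bound.

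First, fix a very ample $\OO_X(1)$ with divisor class $H$ and take Orlov's split generator $G := \bigoplus_{i=0}^{\dim X + 1} \OO_X(iH)$ of $D^b(X)$. By Theorem \ref{delta'}, it suffices to estimate $\delta'(G, \Lf^n G)$ as $n \to \infty$. Since $\Lf$ preserves line bundles via $\Lf \OO_X(D) = \OO_X(f^*D)$, we have $\Lf^n \OO_X(jH) = \OO_X(j(f^n)^*H)$, so the computation reduces to bounding
\[
\delta'(G, \Lf^n G) = \sum_{0 \le i, j \le \dim X + 1} \sum_{p \ge 0} h^p\bigl(\OO_X(j(f^n)^*H - iH)\bigr).
\]
The terms with $j = 0$ are independent of $n$. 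For $j \ge 1$, I would apply Fujita vanishing to $E := \OO_X(-iH)$ to obtain an integer $m_i$ with $H^p(E(m_i H + D)) = 0$ for $p > 0$ and any nef divisor $D$. Setting $D_n := j(f^n)^*H - m_iH$, so that $\OO_X(j(f^n)^*H - iH) \cong E \otimes \OO_X(m_iH + D_n)$, the divisor $D_n$ is nef for $n$ large enough, and Fujita then forces $h^p = 0$ for all $p > 0$. Only $\chi(\OO_X(j(f^n)^*H - iH))$ survives.

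By Hirzebruch--Riemann--Roch this Euler characteristic is a polynomial in the intersection numbers $((f^n)^*H)^k \cdot H^{d-k} \cdot \mathrm{td}_{d-k}(X)$ with $d = \dim X$. Each such intersection is governed by the operator norm of $(f^n)^*$ acting on the relevant graded piece of $\bigoplus_p \mathrm{CH}^p_{\mathrm{num}}(X) \otimes \RR$. The Chern character identifies this space with $K_{\mathrm{num}}(X) \otimes \RR$ as an $f^*$-module, so its spectral radius equals $\rho([\Lf])$. Standard Jordan-form estimates then give $\delta'(G, \Lf^n G) \le C \cdot n^{\dim X} \cdot \rho([\Lf])^n$, from which $\hcat(\Lf) \le \log \rho([\Lf])$ follows, completing the theorem.

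The step I expect to be the main obstacle is verifying that $D_n$ is nef for all $n$ sufficiently large. For a finite surjective $f$ this is immediate because $(f^n)^*H \cdot C \to \infty$ for every curve $C \subset X$; in the merely surjective case one may need to invoke Stein factorization to reduce to the finite setting, or to argue directly using the asymptotic structure of $(f^n)^*H$ in the nef cone of $X$.
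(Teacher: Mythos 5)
Your overall architecture (lower bound from \cite{KST}, upper bound by using Fujita vanishing to reduce $\delta'$ to an Euler characteristic, then Riemann--Roch and a Jordan-form estimate) matches the paper's, but the step you flag as the ``main obstacle'' is a genuine gap, and the fixes you suggest do not close it. The claim that $D_n = j(f^n)^*H - m_iH$ is nef for $n \gg 0$ is false for a general surjective endomorphism: surjective endomorphisms include automorphisms, and already for $f = \mathrm{id}_X$ or any finite-order automorphism one has $(f^n)^*H = H$ bounded, so $jH - m_iH$ is not nef whenever $m_i > j$. Even excluding automorphisms, a finite endomorphism such as $[2]_A \times \mathrm{id}_B$ on a product of abelian varieties has $(f^n)^*H \cdot C$ constant for curves $C$ in the fibers of the first projection, so your assertion that $(f^n)^*H \cdot C \to \infty$ for every curve fails, and neither Stein factorization nor the asymptotic structure of $(f^n)^*H$ rescues the argument.

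The paper avoids this entirely by an asymmetric choice of generators: it normalizes $H$ so that $m(\OO_X, H) = 1$ and pairs $G' = \bigoplus_{i=1}^{\dim X+1}\OO_X(-iH)$ against $G = \bigoplus_{j=1}^{\dim X+1}\OO_X(jH)$, so that the relevant cohomology groups are $H^p(\OO_X(iH + j(f^n)^*H))$ with $i, j \geq 1$. Writing this divisor as $iH + D$ with $D = j(f^n)^*H$, the class $D$ is nef for \emph{every} $n$ simply because the pullback of an ample divisor under a surjective morphism is nef --- no growth of $(f^n)^*H$ is needed --- and Fujita vanishing applies uniformly in $n$. If you replace your single symmetric generator by this pair $(G', G)$ (which Theorem \ref{delta'} permits), the rest of your argument goes through as written.
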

\begin{proof}
By the result of \cite{KST}, we have $\hcat(\Lf) \geq \rho([\Lf])$. 
Note that we can take a very ample divisor $H$ on $X$ such that $m(\OO_X, H)=1$ . Then we consider the split generators
$ G:=\bigoplus_{i=1}^{\dim X+1} \OO_X(iH), G':=\bigoplus_{i=1}^{\dim X+1} \OO_X(-iH)$
of $D^b(X)$.
By Theorem \ref{Fujita}, we have
\begin{eqnarray*}
 \RHom(G', f^{*n}G)&=&\mathrm{Hom}(\OO_X, f^{*n}G \otimes G)\\
                                  &=&\mathrm{Hom}(G', f^{*n}G).
\end{eqnarray*}
Since $\delta'(G', f^{*n}G)=\chi(G', f^{*n}G)$,
we obtain
\begin{eqnarray*}
\hcat(\Lf)&=&\lim_{n \to \infty}\log\delta^\prime(G', f^{*n}G)/n\\
              &=&\lim_{n \to \infty}\log\chi(G', f^{*n}G)/n\\
              &\leq& \log \rho([\Lf]).
\end{eqnarray*}
\end{proof}

\appendix
\section{Complement of spherical objects on K3 surfaces \\
by Arend Bayer}

In this appendix, we prove that the complement of a spherical object on a K3 surface $X$ is
non-empty whenever that spherical object can be made stable for \emph{some} stability condition
on $\Db(X)$, in particular for all spherical objects on K3 surfaces of Picard rank one.
This partially answers a question by Daniel Huybrechts \cite[Chapter
16]{Daniel:K3-book}, and shows that Theorem 3.1 applies in a large number of cases. Our proof is
based on Bridgeland stability conditions as constructed in \cite{Bridgeland:K3} and wall-crossing
 in particular the analysis of divisorial contractions of \emph{Brill-Noether divisors} in
\cite{BM:walls}.

Throughout this appendix, we let $X$ be a smooth complex K3 surface. We denote by $\Stab^\dagger(X)$
the connected component of the space of stability conditions constructed in \cite{Bridgeland:K3}.

\begin{prop} \label{prop:nonemptyspherical}
Let $X$ be a K3 surface, and let $\mathcal{E}$ be a spherical object that is $\sigma$-stable for some
stability condition $\sigma = (Z, \mathcal{P})$ in the distinguished connected component $\Stab^\dagger(X)$. Then
the orthogonal complent $^{\perp}\mathcal{E} = \mathcal{E}^\perp \subset \Db(X)$ of $\mathcal{E}$ contains non-trivial
objects.
\end{prop}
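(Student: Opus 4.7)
My plan is as follows. Set $v := v(\mathcal{E}) \in \widetilde{H}^{1,1}(X, \mathbb{Z})$, so $v^2 = -2$. I would try to produce the desired $F$ as a $\sigma'$-stable object for some $\sigma' \in \Stab^\dagger(X)$ sufficiently close to $\sigma$, whose Mukai vector $w$ satisfies $\langle v, w\rangle = 0$. Granted such an $F$ together with the vanishings $\Hom(\mathcal{E}, F) = 0 = \Hom(F, \mathcal{E})$, Serre duality on $X$ gives $\Ext^2(\mathcal{E}, F) \simeq \Hom(F, \mathcal{E})^\vee = 0$, and the Euler pairing
\[ \ext^0(\mathcal{E}, F) - \ext^1(\mathcal{E}, F) + \ext^2(\mathcal{E}, F) \;=\; \chi(\mathcal{E}, F) \;=\; -\langle v, w\rangle \;=\; 0 \]
then forces $\ext^1(\mathcal{E}, F) = 0$ as well, so $\RHom(\mathcal{E}, F) = 0$ and $F \in \mathcal{E}^\perp = {}^\perp\mathcal{E}$.

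To produce $w$: since $v^2 < 0$, the sublattice $v^\perp \subset \widetilde{H}^{1,1}(X, \mathbb{Z})$ has signature $(2, \rho-1)$, and hence contains primitive classes of arbitrarily large positive square. Pick any such primitive $w \in v^\perp$ with $w^2 > 0$. The results of \cite{Bridgeland:K3} together with \cite{BM:walls} imply that for $\sigma''$ generic in its chamber the moduli space $M_{\sigma''}(w)$ is a non-empty, smooth, irreducible projective holomorphic symplectic variety of dimension $w^2 + 2 \geq 2$. Because the chamber on which $\mathcal{E}$ is $\sigma$-stable is open and the walls for $w$ are locally finite in $\Stab^\dagger(X)$, I can perturb $\sigma$ to some $\sigma'$ in that chamber and generic for $w$, so that $\mathcal{E}$ remains $\sigma'$-stable while $M_{\sigma'}(w)$ has the stated properties. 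The task then reduces to showing that the Brill--Noether loci
\[ \Theta^0 := \{F \in M_{\sigma'}(w) : \Hom(\mathcal{E}, F) \neq 0\}, \qquad \Theta^2 := \{F \in M_{\sigma'}(w) : \Hom(F, \mathcal{E}) \neq 0\}, \]
which are closed by semi-continuity, are each \emph{proper} subsets; any $F$ in the non-empty open complement will then lie in $\mathcal{E}^\perp$.

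The hard part is ruling out $\Theta^0 = M_{\sigma'}(w)$ or $\Theta^2 = M_{\sigma'}(w)$. If $\Theta^0$ were the whole moduli, every $\sigma'$-stable object of class $w$ would admit a non-zero map from $\mathcal{E}$; this is precisely the numerical signature of a totally semistable wall for $w$ along which $\mathcal{E}$ appears as a Jordan--H\"older factor. Here I would invoke the wall-and-chamber analysis of \cite{BM:walls}: because $v$ is spherical and $\langle v, w\rangle = 0$, the wall of $w$ traced out by $\mathcal{E}$ is of Brill--Noether type, and on the appropriate side the locus of $F$ admitting $\mathcal{E}$ as a sub- or quotient object is the exceptional divisor of a divisorial Brill--Noether contraction of $M_{\sigma'}(w)$, hence genuinely a divisor rather than the full moduli space. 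Since $\dim M_{\sigma'}(w) \geq 2$, the union $\Theta^0 \cup \Theta^2$ is a proper closed subset, and any $F$ in its complement provides the required object in $\mathcal{E}^\perp$. The delicate point, where the cited wall-crossing machinery is essential, is verifying that the numerical conditions which allow us to choose $w$ really do force the Brill--Noether walls to be divisorial rather than totally semistable in our situation.
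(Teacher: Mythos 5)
Your overall strategy is the same as the paper's: find a Mukai vector $w$ orthogonal to $v(\E)$ with $w^2>0$, consider the moduli space $M_{\sigma'}(w)$, and use the Brill--Noether analysis of \cite{BM:walls} to argue that the locus of objects with nonzero Hom to or from $\E$ is a proper closed subset; the final step (Serre duality plus $\chi(\E,F)=0$ forcing $\RHom(\E,F)=0$) is also identical. The gap is exactly at the point you flag as ``delicate'' but do not resolve: the claim that \emph{any} primitive $w\in v^\perp$ with $w^2>0$, together with a generic $\sigma'$, makes the Brill--Noether loci proper is false. Concretely, take $\E=\OO_X$, so $v(\E)=(1,0,1)$, and $w=(1,0,-1)=v(I_Z)$ for $Z$ a length-two subscheme. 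Then $\langle v(\E),w\rangle=0$ and $w^2=2>0$, and for $\sigma'$ generic near the large-volume limit $M_{\sigma'}(w)=\mathrm{Hilb}^2(X)$ with both $\OO_X$ and every $I_Z$ stable; yet $\Hom(I_Z,\OO_X)\neq 0$ for \emph{every} point of the moduli space, so $\Theta^2$ is everything. What goes wrong is that the rank-two lattice $\langle v(\E),w\rangle$ contains an isotropic class (here $(1,0,0)$), which is precisely the situation where \cite[Proposition 7.1]{BM:walls} does not apply and the wall need not be of divisorial Brill--Noether type.

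To close the gap you need two additional inputs, both supplied in the paper's proof. First, $w$ must be chosen so that $2w^2$ is not a perfect square, equivalently so that $\langle v(\E),w\rangle$ contains no isotropic classes; the paper proves a density lemma (Lemma A.3) showing such integral $w$ exist arbitrarily close (up to rescaling) to any real class in $v(\E)^\perp$ of positive square. Second, the Brill--Noether wall for $w$ must actually pass through (a small deformation of) the given $\sigma$, on the correct side: the paper arranges this by taking $w$ inside $v(\E)^\perp\cap\langle \mathrm{Ker}\,Z, v(\E)\rangle$ --- i.e.\ with $Z(w)$ proportional to $Z(v(\E))$, on the same ray --- which exists by a signature count since $\mathrm{Ker}\,Z$ is negative definite. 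Only then does \cite[Proposition 7.1]{BM:walls} guarantee that the relevant Hom-jumping locus in $M_{\sigma_+}(w)$ is a genuine divisor, for $\sigma_+$ adjacent to the wall on the appropriate side. Your perturbation to a $\sigma'$ that is merely ``generic for $w$'' does not control either of these conditions, and as the example shows, the conclusion can fail without them.
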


Note that by \cite[Proposition 14.2]{Bridgeland:K3}, this applies to every vector bundle that is Gieseker-stable for
some polarization. Moreover, it applies to all spherical objects for generic K3s:

\begin{cor} \label{cor:nonemptyspherical}
Assume that the K3 surface $X$ has Picard rank one. Then every spherical object $\mathcal{E} \in \Db(X)$ has
non-trivial orthogonal complement.
\end{cor}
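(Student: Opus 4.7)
The strategy is to reduce Corollary A.2 to Proposition A.1 by showing that every spherical object $\mathcal{E} \in \Db(X)$ on a K3 surface $X$ of Picard rank one is $\sigma$-stable for some $\sigma \in \Stab^\dagger(X)$. I would start by analyzing the Mukai vector. With $H$ the ample generator of $\mathrm{NS}(X)$, a spherical class $v(\mathcal{E}) = (r, kH, s) \in \HH$ satisfies $k^2 H^2 - 2rs = -2$, and since $H^2 > 0$ the rank $r$ cannot vanish; after replacing $\mathcal{E}$ by $\mathcal{E}[1]$ if necessary we may assume $r > 0$. The class $v$ is automatically primitive, since $v^2 = -2$ forces any decomposition $v = m v'$ (with $v' \in \HH$) to have $m = 1$.

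By Mukai's theorem on moduli of sheaves on K3 surfaces, there is a unique (up to isomorphism) Gieseker-$H$-stable spherical sheaf $\mathcal{F}$ with $v(\mathcal{F}) = v$. By the Proposition 14.2 of \cite{Bridgeland:K3} invoked in the appendix, $\mathcal{F}$ is $\sigma$-stable for some $\sigma \in \Stab^\dagger(X)$ in the geometric chamber of $H$. The heart of the argument is then to identify $\mathcal{E}$ with $\mathcal{F}$. My plan is to first produce a stability condition $\sigma \in \Stab^\dagger(X)$ for which $\mathcal{E}$ is $\sigma$-semistable: starting from any $\sigma_0$ in the geometric chamber, analyze the Harder--Narasimhan filtration of $\mathcal{E}$ with respect to $\sigma_0$, and use local finiteness of walls in $\Stab^\dagger(X)$ for the class $v$ to deform $\sigma_0$ into an adjacent chamber in which the HN filtration of $\mathcal{E}$ collapses. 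Once $\mathcal{E}$ is $\sigma$-semistable, the uniqueness of $\sigma$-semistable objects with primitive spherical Mukai vector on a K3 surface --- the moduli space is a single reduced point --- forces $\mathcal{E} \cong \mathcal{F}$, so $\mathcal{E}$ itself is $\sigma$-stable in $\Stab^\dagger(X)$, and Proposition A.1 yields $^{\perp}\mathcal{E} \neq \emptyset$.

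The main obstacle is this deformation/uniqueness step. On K3 surfaces of higher Picard rank, iterated spherical twists can produce distinct non-isomorphic spherical objects with the same Mukai vector, so the Picard rank one hypothesis is essential: it constrains the wall-and-chamber structure of $\Stab^\dagger(X)$ for the class $v$ enough that every spherical object with Mukai vector $v$ represents the unique point of the corresponding moduli space. The wall analysis of \cite{BM:walls} used elsewhere in the appendix should supply the required control over walls in $\Stab^\dagger(X)$ intersecting the $v$-stratum.
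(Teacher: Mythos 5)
Your overall strategy---reduce to Proposition \ref{prop:nonemptyspherical} by showing that every spherical object on a Picard rank one K3 is $\sigma$-stable for some $\sigma \in \Stab^\dagger(X)$---is exactly the reduction the paper makes. But the paper disposes of that step by citing \cite[Corollary~6.9]{K3Pic1}, which is precisely the (nontrivial) theorem that every spherical object on a K3 of Picard rank one is stable for some stability condition in the distinguished component. You instead try to prove this from scratch, and the attempt has a genuine gap at its central step: the claim that one can ``deform $\sigma_0$ into an adjacent chamber in which the HN filtration of $\mathcal{E}$ collapses.'' Local finiteness of walls gives you a chamber decomposition, but it gives no mechanism whatsoever for the HN filtration of a fixed unstable object to shorten as you cross a wall; a priori $\mathcal{E}$ could fail to be semistable in every chamber, and ruling this out is the entire content of the Bayer--Bridgeland result (their proof goes through a careful analysis of the geometry of $\Stab^\dagger(X)$ and the action of spherical twists, not a naive wall-crossing deformation). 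As written, you have restated the theorem you need rather than proved it.

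Two smaller issues. First, the detour through the Gieseker-stable sheaf $\mathcal{F}$ is both unnecessary and unjustified: even granting that $\mathcal{E}$ is $\sigma$-stable for some $\sigma$, the stability condition $\sigma$ need not lie in the geometric chamber, and $M_\sigma(v)$ and $M_{\sigma'}(v)$ for $\sigma'$ geometric are identified only after wall-crossing, so ``the moduli space is a single reduced point'' does not let you conclude $\mathcal{E} \cong \mathcal{F}$; fortunately the corollary only needs stability of $\mathcal{E}$ itself, so you should drop this identification entirely. Second, uniqueness of the semistable object of primitive class $v$ with $v^2 = -2$ holds only for $\sigma$ generic with respect to $v$; on a wall there can be strictly semistable objects of class $v$. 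The clean fix is simply to invoke \cite[Corollary~6.9]{K3Pic1} for the stability of $\mathcal{E}$ and then apply Proposition \ref{prop:nonemptyspherical}, which is what the appendix does.
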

\begin{proof}
By \cite[Corollary 6.9]{K3Pic1}, there exists a stability condition $\sigma \in \Stab^\dagger(X)$ for
which $\mathcal{E}$ is $\sigma$-stable.
\end{proof}

\subsection{Proof}
The Picard rank of $X$ is denoted by $\rho$.
Recall from Section \ref{sect:prelims} that we write $K_{\mathrm{num}}(X) \cong \widetilde{H}^{1,1}(X,\mathbb{Z})$
for the numerical $K$-group of $X$ , which is a lattice of signature $(2, \rho)$; for $\mathcal{E} \in D^b(X)$, we
write $v(\E)$ for its Mukai vector. 
The central charge $Z$ of a stability condition $\sigma = (Z, \mathcal{P}) \in
\Stab^\dagger(X)$ is a group homomorphism $Z \colon K_{\mathrm{num}}(X) \to \CC$; we write
$\mathrm{Ker} Z \subset K_{\mathrm{num}}(X) \otimes \RR$ for its kernel. 
By \cite[Theorem 1.1]{Bridgeland:K3}, $\mathrm{Ker} Z$ is negative definite with respect to the
Mukai pairing.  We write $s := v(\mathcal{E})$ for the Mukai vector of a given spherical
object $\mathcal{E}$.

\begin{lem} \label{lem:densitynonsquares}
Let $s^\perp_+ \subset K_{\mathrm{num}}(X) \otimes \mathbb{R}$ be the subset of classes $v$ with $(v, s) = 0$ and 
$v^2 > 0$. Then up to rescaling, 
the set of integral classes $v \in s^\perp_+$ such that $2 v^2$ is not a square
is dense in $s^\perp_+$. 
\end{lem}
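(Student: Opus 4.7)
Proof plan. The orthogonal complement $s^\perp$ is a lattice of rank $\rho + 1$ and signature $(2, \rho - 1)$, so $s^\perp_+$ is a non-empty open cone in $s^\perp \otimes \RR$. The plan is to fix $v_0 \in s^\perp_+$ together with a small open neighborhood $U$ of its ray, and then produce an integral $v \in s^\perp$ with $v^2 > 0$, ray lying in $U$, and $2v^2$ not a perfect square.

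First I would approximate $v_0$ by an integer vector $v_1 \in s^\perp$ with $v_1^2 > 0$ whose ray lies in $U$, which is possible by density of rational directions. Then I would pick any integer $v_2 \in s^\perp$ linearly independent from $v_1$ such that the span of $v_1, v_2$ is non-degenerate for the Mukai form, i.e.\ $(v_1, v_2)^2 \neq v_1^2 v_2^2$. This is a generic open condition on $v_2$; the forbidden locus is a proper quadric because the restriction of the Mukai form to $s^\perp$ is itself non-degenerate.

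Next, I would consider the integer family $w_n := n v_1 + v_2 \in s^\perp$ for $n \in \ZZ_{>0}$. Its ray converges to that of $v_1$ as $n \to \infty$, so for $n$ sufficiently large we have $w_n^2 > 0$ and the ray of $w_n$ lies in $U$. A direct computation gives
\begin{equation*}
2 w_n^2 \;=\; 2 v_1^2 \cdot n^2 + 4 (v_1, v_2) \cdot n + 2 v_2^2 \;=:\; Q(n),
\end{equation*}
a quadratic polynomial in $n$ with integer coefficients and discriminant $16\bigl((v_1, v_2)^2 - v_1^2 v_2^2\bigr) \neq 0$; hence $Q$ is not the square of any polynomial.

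To conclude, I would invoke the classical fact that a non-square integer quadratic polynomial takes perfect-square values on only a sparse set of integers. Concretely, $Q(n) = m^2$ reduces, after completing the square in $n$, to a generalized Pell equation $u^2 - 8 v_1^2 \cdot m^2 = D$ with $D \neq 0$, whose integer solutions are either finite in number (when $8 v_1^2$ is a perfect square, so the left-hand side factors) or grow at least geometrically (giving only $O(\log N)$ solutions of size $\leq N$). Either way, arbitrarily large integers $n$ yield $Q(n)$ non-square, and any such $n$ produces the desired $w_n$. The main delicate points are verifying the non-degeneracy condition on $v_2$ and stating the Pell-type counting cleanly; I do not foresee a serious obstacle beyond these.
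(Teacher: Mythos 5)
Your proof is correct, and its skeleton --- perturb an integral class along a second integral direction and show that the resulting quadratic in $n$ fails to be a perfect square for some large $n$ --- is the same as the paper's. The difference lies in how the number-theoretic core is handled. The paper only perturbs the \emph{bad} classes: it starts from an integral $v \in s^\perp_+$ for which $2v^2 = c^2$ is already a square, and chooses $u$ orthogonal to both $s$ and $v$, so that $2(Nv+u)^2 = (Nc)^2 + 2u^2$ has no linear term in $N$; a fixed nonzero integer $2u^2$ can be written as a difference of squares $m^2 - (Nc)^2$ in only finitely many ways, so the conclusion is immediate and no Pell-type analysis is needed. Your version starts from a generic rational approximation $v_1$ and a non-orthogonal $v_2$, which forces you to invoke the full statement that a quadratic polynomial with nonzero discriminant represents squares only on a sparse set of integers; that is classical but strictly heavier machinery, and you also take on the extra task of justifying that a suitable $v_2$ exists (easiest: take $v_2 \in v_1^\perp \cap s^\perp$ anisotropic, which exists because that sublattice is nondegenerate of rank $\rho \geq 1$, and then $(v_1,v_2)^2 = 0 \neq v_1^2 v_2^2$). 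Both routes share the same delicate point in symmetric form: your $v_2$ must make the plane $\langle v_1, v_2\rangle$ nondegenerate, just as the paper's $u$ must implicitly satisfy $u^2 \neq 0$. In short, your argument is sound and slightly more general, at the cost of a less elementary final step.
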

Such statements are well-known. 
Note that $2v^2$ is a square iff the rank two lattice $\langle v, s\rangle$ contains
a vector of square zero.
\begin{proof}
Consider an integral class $v \in s^\perp_+$, and 
assume that $2v^2$ is a square. Let $u \in s^\perp \cap v^\perp$ be any integral class orthogonal to
both $s$ and $v$.
Then $2 (Nv + u)^2 = N^2 (2v^2) + 2 u^2$ is not a square for $N \gg 0$, and
$\frac 1N (Nv + u) \to v$ for $N \to +\infty$.
\end{proof}

\begin{proof}[Proof of Proposition \ref{prop:nonemptyspherical}]
Recall that stability of $\mathcal{E}$ is an open property, i.e.~it is preserved by small deformations of $\sigma$ \cite[Proposition
9.3]{Bridgeland:K3}. 

Consider the subspace $K  = \langle \mathrm{Ker} Z, s\rangle \subset K_{\mathrm{num}}(X)$ spanned by $s$ and the kernel
of $Z$; equivalently, $v \in K$ if and only if $Z(v)$ is proportional to $Z(s)$.
Since $\mathrm{Ker} Z$ has signature $(0, \rho)$ and
$K_{\mathrm{num}}(X)$ has signature $(2, \rho)$, the space $K$ must have signature $(1, \rho)$, and $s^\perp \cap
K$ has signature $(1, \rho-1)$. Thus, there is a class $v \in s^\perp \otimes \mathbb{R}$ with $v^2 > 0$
and $Z(v)$ proportional to $Z(s)$.

By Lemma \ref{lem:densitynonsquares}, we can deform $\sigma$ slightly and assume that
$v$ is integral and primitive, and such that $2v^2$ is not a square; by opennnes of stability,
$\mathcal{E}$ is still $\sigma$-stable. Replacing $v$ by $-v$ if necessary, we may also assume
that $Z(v)$ and $Z(s)$ lies on the same ray.

The claim now comes from the analysis of the wall-crossing of $\sigma$-stable objects of class $v$ 
in \cite{BM:walls}. To this end, let $\sigma_+ =
(Z_+, \mathcal{P}_+)$ be a stability condition nearby $\sigma$, such that the phase of $Z_+(v)$ is smaller
than the phase of $Z_+(s)$. Let  $M_{\sigma_+}(v)$ be the coarse moduli space of $\sigma_+$-stable
objects of Mukai vector $v$
The stability condition $\sigma$ lies on a wall $\mathcal{W}$ for $M_{\sigma_+}(v)$,
corresponding to the lattice $\mathcal{H}_\mathcal{W} = \langle s, v \rangle$ in the sense of \cite[Proposition
5.1]{BM:walls}: this means that for a generic stability condition on the wall $\mathcal{W}$, the integral classes in 
$\mathcal{H}_\mathcal{W}$ are the only integral classes sent to a complex number proportional to $Z(v)$.

By construction, $\mathcal{H}_\mathcal{W}$ does not contain an \emph{isotropic} class $w$ with $w^2 = 0$. Therefore,
we can apply \cite[Proposition 7.1]{BM:walls}; note that $s$ is \emph{effective} in the terminology
used [ibid.], as $Z(s)$ and $Z(v)$ lie on the same ray in the complex plane, see \cite[Proposition
5.5]{BM:walls}. 
Now \cite[Proposition 7.1]{BM:walls}
says that the moduli space $M_{\sigma_+}(v)$
contains divisor characterized by the property $\mathrm{Hom}(\ , \mathcal{E}) \neq 0$. (This is the divisor
contracted by the wall-crossing associated to the wall $\mathcal{W}$.) In particular, any object $F$ in the complement
of this divisor satisfies $\Hom(F, \mathcal{E}) = 0$. It also satisfies $\Hom(\mathcal{E}, F) = 0 = \Ext^2(F, \mathcal{E})$
by stability. Since $\mathcal{E}$ and $F$ are in the same heart of a K3 category, they also satisfy
$\Ext^{<0}(F, \mathcal{E}) = 0 = \Ext^{>2}(F, \mathcal{E}) = \Ext^{<0}(\mathcal{E}, F)$. Finally, since $\mathcal{H}^i(F, \mathcal{E}) = (v,
s) = 0$, we must also have $\Ext^1(F, \mathcal{E}) = 0$, and therefore $F \in ^\perp \mathcal{E}$.
 \end{proof}

\bibliography{all}                      
\bibliographystyle{halpha}     

 \end{document}